\newtheorem{theorem}{Theorem}[section]
\newtheorem{corollary}[theorem]{Corollary}
\newtheorem{lemma}[theorem]{Lemma}
\newtheorem{proposition}[theorem]{Proposition}
\theoremstyle{definition}
\newtheorem{definition}[theorem]{Definition}
\newtheorem{problem}[theorem]{Problem}
\newtheorem{example}[theorem]{Example}
\newtheorem*{notation*}{Notation}
\newtheorem*{remark*}{Remark}
\newtheorem{claim}{Claim}[theorem]
\let\emptyset\varnothing
\newcommand{\cS}{\mathcal{S}}
\newcommand{\cO}{\mathcal{O}}
\newcommand{\cV}{\mathcal{V}}
\newcommand{\cR}{\mathcal{R}}
\DeclareMathOperator{\st}{st}
\DeclareMathOperator{\lk}{lk}
\title{Reconstructing a shellable sphere from its facet-ridge graph}
\author{Yirong Yang\\
\small Department of Mathematics\\
\small University of Washington\\
\small Seattle, WA 98195-4350, USA\\
\small \texttt{yyang1@uw.edu}
}
\date{}
\begin{document}

\maketitle

\begin{abstract}
    We show that the facet-ridge graph of a shellable simplicial sphere $\Delta$ uniquely determines the entire combinatorial structure of $\Delta$. This generalizes the celebrated result due to Blind and Mani (1987), and Kalai (1988) on reconstructing simple polytopes from their graphs. Our proof utilizes the notions of good acyclic orientations from Kalai's proof as well as $k$-systems introduced by Joswig, Kaibel, and K\"orner. 
\end{abstract}

\section{Introduction}\label{sec:intro}
In the 1980s, Blind and Mani \cite{Mani} proved an unpublished conjecture of Perles asserting that the graph of a simple polytope determines the entire combinatorial structure of the polytope. Only a year later, Kalai \cite{Kalai} gave a much simpler proof of this result by presenting an elegant reconstruction algorithm. Kalai's algorithm requires exponential computation time in the size of the graph. In 2009, Friedman \cite{Friedman} provided a polynomial time algorithm using the polynomial certificates of Joswig, Kaibel, and K\"orner \cite{Joswig}.

There has been great interest in various forms of generalizations of Blind and Mani's result. For the polytope side of the story, see a recent survey paper by Bayer \cite{Bayer}, which focuses on reconstructing polytopes from their lower-dimensional skeleta. This paper focuses on a generalization in another direction. Stated in the dual form, Blind and Mani's result asserts that the combinatorial structure of a simplicial polytope is determined by its facet-ridge graph. This naturally leads to the following question (asked in \cite{Mani} and later formulated as a conjecture by Kalai \cite{Kalai3}):

\begin{problem}\label{prob:main}
    Is every simplicial sphere completely determined by its facet-ridge graph? 
\end{problem}

Recently, Ceballos and Doolittle \cite{Ceballos} gave a positive answer to this question for the family of spherical subword complexes; these complexes are strongly shellable. The goal of this paper is to establish the same result for a much larger class of spheres:

\begin{theorem}\label{thm:main}
    The combinatorial structure of a shellable simplicial sphere is determined by its facet-ridge graph. 
\end{theorem}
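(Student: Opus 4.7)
My plan is to follow Kalai's approach, using shellings as a combinatorial substitute for the generic linear functionals available in the polytope case. The central object is a dualization of Kalai's notion: an acyclic orientation $\cO$ of the facet-ridge graph $G := G(\Delta)$ is \emph{good} if, for every face $\sigma \in \Delta$, the induced subgraph of $\cO$ on $\cF(\sigma) := \{F : \sigma \subseteq F\}$ has a unique sink. The first step is to show that every shelling $F_1, \ldots, F_n$ of $\Delta$ induces a good orientation via $F_j \to F_i$ whenever $i < j$ and $F_i \cap F_j$ is a ridge. Acyclicity is immediate; for the unique sink property on $\cF(\sigma)$, let $F_{i_0}$ be the earliest facet in $\cF(\sigma)$ and take any $F_j \in \cF(\sigma)$ with $j > i_0$. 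The restriction face $\cR(F_j)$ (the minimal new face at step $j$) cannot lie in $\sigma$, for otherwise $\sigma$ would be a new face at step $j$, contradicting $\sigma \subseteq F_{i_0}$. Picking $v \in \cR(F_j) \setminus \sigma$ then yields a ridge $F_j \setminus \{v\}$ shared with some earlier facet $F_m$ still containing $\sigma$, so $F_j \to F_m$ is an out-edge of $\cO$ within $\cF(\sigma)$, and $F_{i_0}$ is the unique sink.

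The second step is to recognize good orientations from $G$ alone. Set $\Phi(\cO) := \sum_{F \in V(G)} 2^{\deg^-_{\cO}(F)}$. The key identity is $\Phi(\cO) = \sum_{\sigma \in \Delta} s_{\cO}(\sigma)$, where $s_{\cO}(\sigma)$ denotes the number of sinks of $\cO$ restricted to $\cF(\sigma)$. This follows from the bijection between subsets of the neighbors of $F$ and faces $\sigma \subseteq F$, namely $S \subseteq N(F) \leftrightarrow \sigma_S = F \setminus \{v_{F'} : F' \in S\}$: under this bijection, the neighbors of $F$ inside $\cF(\sigma_S)$ are exactly $S$, so $F$ is a sink of $\cO|_{\cF(\sigma_S)}$ iff $S \subseteq N^-(F)$. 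Since every non-empty acyclic subgraph has at least one sink, $\Phi(\cO) \geq f(\Delta)$, with equality precisely when each $\cF(\sigma)$ has a unique sink, i.e.\ when $\cO$ is good. Because shellability guarantees the existence of a good orientation, the minimum of $\Phi$ over acyclic orientations of $G$ equals $f(\Delta)$, and the good orientations are exactly its minimizers --- a condition depending only on $G$.

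Finally, given a pair $(G, \cO)$ with $\cO$ good, I would reconstruct the face lattice using a $k$-system-type argument in the spirit of Joswig, Kaibel, and K\"orner. The task is to identify which subsets $S \subseteq V(G)$ equal $\cF(\sigma)$ for some $\sigma$. One should have $S = \cF(\sigma)$ iff $G[S]$ is connected and regular of the appropriate degree, $\cO|_S$ has a unique sink, and $\cO|_S$ is itself a minimizer of the corresponding in-degree functional on $G[S]$ --- equivalently, $\cO|_S$ is good for the facet-ridge graph of $\lk_\Delta(\sigma)$. This yields a recursive certificate from which the full face lattice can be reassembled. I expect the main obstacle to be this last step: in the polytopal case the restriction of a generic linear functional to any face automatically gives a good orientation, whereas here one must show directly that the restriction of the shelling-induced orientation to $\cF(\sigma)$ is good for $\lk_\Delta(\sigma)$. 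This amounts to reapplying the Step~1 argument inside the link and verifying that the restricted order on facets of $\lk_\Delta(\sigma)$ inherits the restriction-face property --- the key technical point where the $k$-system machinery must be adapted from simple polytopes to shellable spheres.
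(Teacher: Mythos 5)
Your Steps 1 and 2 are correct and match the paper's Propositions~\ref{prop:orientationexists} and~\ref{prop:findgoodorientation}, with essentially the same arguments (your verification of the unique-sink property in Step~1 via restriction faces is in fact more detailed than the paper's one-line justification, and your double-counting identity $\Phi(\cO)=\sum_\sigma s_\cO(\sigma)$ is exactly the paper's proof of Proposition~\ref{prop:findgoodorientation}). The real issue is Step~3, where you leave a genuine gap --- and where you misidentify what the gap is.

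You propose the certificate: $S=\cF(\sigma)$ iff $G[S]$ is connected and $k$-regular, $\cO|_S$ has a unique sink, and $\cO|_S$ minimizes the in-degree functional on $G[S]$. Necessity of these conditions is fine, and the worry you flag --- that $\cO|_S$ might fail to be good on $\lk_\Delta\sigma$ --- is not actually an obstacle: since the shelling of $\Delta$ restricts to a shelling of $\st_\Delta\sigma$ and hence of $\lk_\Delta\sigma$, the restricted orientation $\cO|_{\cF(\sigma)}$ is automatically good; the paper dispatches this in one sentence in the proof of Proposition~\ref{prop:orientationexists}. The real problem is \emph{sufficiency}: nothing in your argument rules out a connected $k$-regular induced subgraph $G[S]$ that is \emph{not} a face-star yet happens to have a unique sink and to minimize the functional under the one orientation $\cO$ you fixed. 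Your conditions are stated relative to a single good orientation and a single candidate set $S$ in isolation, and that is too local. This is precisely the difficulty the paper's Sections~\ref{sec:ksystem} and~\ref{sec:main} are built to handle: the paper does not use a pointwise criterion at all, but instead organizes the candidate families of face-stars into $k$-systems, restricts to the \emph{star-like} ones (unique sink under \emph{every} good acyclic orientation, Definition~\ref{def:starlike}), imposes a \emph{compatibility} condition across dimensions (Definition~\ref{def:compatible}), and then proves the compatible family of star-like $k$-systems is unique via Lemmas~\ref{lem:restriction} and~\ref{lem:firstfacet} plus an induction on $d$ through links. Your proposal would need to be augmented with a substitute for this global uniqueness argument; as written it is a correct two-thirds plus a plausible but unsubstantiated conjecture for the reconstruction step.
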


Recall that by a theorem of Bruggesser and Mani \cite{Mani2}, the boundary complex of any simplicial polytope is shellable. Therefore, our result implies that any simplicial polytope $P$ can be reconstructed from the facet-ridge graph of $P$. 

It follows from Steinitz’s theorem (see \cite[Chapter 4]{Ziegler}) that every simplicial $2$-sphere is realizable as the boundary complex of a $3$-polytope. However, for $d \ge 4$, there are many more shellable simplicial $(d-1)$-spheres than those that are boundaries of $d$-polytopes. The results of Goodman--Pollack \cite{Goodman} and Alon \cite{Alon} imply that there are $2^{\Theta(n \log n)}$ combinatorially distinct $d$-polytopes with $n$ vertices for $d \ge 4$. See also a paper by Padrol, Philippe, and Santos \cite{padrol} for the current state-of-the-art result on the lower bounds on the number of polytopes. In contrast, for $d \ge 4$, there are $2^{\Theta(n^{\lceil (d-1)/2 \rceil})}$ combinatorially distinct shellable simplicial $(d-1)$-spheres with $n$ vertices \cite{Yirong}. When $d$ is odd, this number is attained by Kalai's construction of squeezed spheres \cite{Kalai2}; the fact that all squeezed spheres are shellable is due to Lee \cite{Lee}. When $d$ is even, the number is attained by Nevo, Santos, and Wilson's construction in \cite{Nevo}; the shellability of those spheres is proved in \cite{Yirong}. 

The structure of the paper is as follows. In Section \ref{sec:prelim}, we review several definitions related to simplicial complexes. In Sections \ref{sec:acyclic} and \ref{sec:ksystem}, we review good acyclic orientations,  $k$-frames, and $k$-systems and establish new results about these objects that are crucial for the proof of Theorem \ref{thm:main}. Of particular notice are Proposition \ref{prop:reconstructshelling}, which provides the starting point of our construction, as well as Proposition \ref{prop:kframecorrespondence} and Lemma \ref{lem:vk}, which translate the reconstruction problem into the language of $k$-frames and $k$-systems. In Section \ref{sec:main}, we prove a few lemmas that eventually lead to the proof of Theorem \ref{thm:main}. An outline of the main proof can be found at the beginning of Section \ref{sec:main}.

\section{Preliminaries}\label{sec:prelim}

A good reference to definitions and results outlined in this section is \cite[Chapter 8]{Ziegler}.

Let $V$ be a finite set. A \textbf{\emph{simplicial complex}} $\Delta$ on $V$ is a finite collection of subsets of $V$ such that $\{v\} \in \Delta$ for every $v \in V$ and if $\sigma \in \Delta$ and $\tau \subseteq \sigma$, then $\tau \in \Delta$. The elements of $\Delta$ are called \textbf{\emph{faces}}. The \textbf{\emph{dimension}} of a face $\sigma \in \Delta$ is $\dim \sigma = |\sigma| - 1$. 
The dimension of $\Delta$ is $\dim \Delta = \max \{\dim \sigma : \sigma \in \Delta\}$. We say that $\Delta$ is \textbf{\emph{pure}} if all of its maximal faces with respect to inclusion have dimension $\dim \Delta$. In this case, the maximal faces are called \textbf{\emph{facets}}, and faces of dimension $\dim \Delta - 1$ are called \textbf{\emph{ridges}}. 

Let $\Delta$ be a pure $(d-1)$-dimensional simplicial complex. Its \textbf{\emph{facet-ridge graph}} is the graph whose vertices represent the facets of $\Delta$ and where two vertices form an edge if and only if the corresponding facets share a ridge.

For a face $\sigma \in \Delta$, let $\overline{\sigma}$ denote the collection of all subsets of $\sigma$. A \textbf{\emph{shelling}} of $\Delta$ is a total ordering $T_1, \dots, T_n$ of the facets of $\Delta$ such that $\overline{T}_i \cap (\overline{T}_1 \cup \cdots \cup \overline{T}_{i-1})$ is a pure $(d-2)$-dimensional simplicial complex for every $2 \le i \le n$. If such an ordering exists, then we say that $\Delta$ is \textbf{\emph{shellable}}. In this case, for every $1 \le i \le n$, the family of sets $\{\sigma \subseteq T_i: \sigma \nsubseteq T_j \text{ for all } j < i\}$ ordered by inclusion has a unique minimal element $\cR(T_i)$. We call $\cR(T_i)$ the \textbf{\emph{restriction face}} of $T_i$, and $\cR$ the \textbf{\emph{restriction map}} of this shelling.

We say that $\Delta$ is \textbf{\emph{partitionable}} if there exists a \textbf{\emph{partitioning}} $\Delta = \bigsqcup_{i = 1}^n [\sigma_i, T_i]$, where the $T_i$'s are the facets of $\Delta$ and the intervals $[\sigma_i, T_i] = \{\sigma \in \Delta: \sigma_i \subseteq \sigma \subseteq T_i\}$ for $1\le i\le n$ are pairwise disjoint. Partitionable complexes are not necessarily shellable. On the other hand, if $T_1, \dots, T_n$ is a shelling of $\Delta$, then $\bigsqcup_{i = 1}^n [\cR(T_i), T_i]$ is a partitioning of $\Delta$.

Let $\sigma \in \Delta$. The \textbf{\emph{star}} of $\sigma$ in $\Delta$, denoted $\st_\Delta \sigma$, is the pure $(d-1)$-dimensional simplicial complex whose facets are the facets of $\Delta$ containing $\sigma$. The \textbf{\emph{link}} of $\sigma$ in $\Delta$ is defined to be $\lk_\Delta \sigma = \{\tau \in \Delta: \tau \cap \sigma = \emptyset, \tau \cup \sigma \in \Delta\}$. In particular, $\lk_\Delta \sigma$ is a pure $(d-\dim \sigma-2)$-dimensional simplicial complex whose set of facets is $\{T \setminus \sigma: T \text{ is a facet of } \st_\Delta \sigma\}$. Every shelling of $\Delta$ induces a shelling of $\st_\Delta \sigma$ and $\lk_\Delta \sigma$.

If every ridge of $\Delta$ is contained in at most two facets and the facet-ridge graph of $\Delta$ is connected, then $\Delta$ is a \textbf{\emph{pseudomanifold}}. The \textbf{\emph{boundary}} of $\Delta$, denoted $\partial \Delta$, is the subcomplex of $\Delta$ generated by all ridges that are contained in only one facet. A face of $\Delta \setminus \partial \Delta$ is in the \textbf{\emph{interior}} of $\Delta$; such faces are called \textbf{\emph{interior faces}}. We call $\Delta$ a \textbf{\emph{simplicial $(d-1)$-sphere}} (respectively, a \textbf{\emph{simplicial $(d-1)$-ball}}) if the \textbf{\emph{geometric realization}} $\lVert \Delta \rVert$ of $\Delta$ is homeomorphic to a topological $(d-1)$-sphere ($(d-1)$-ball). If $\Delta$ is a pseudomanifold with more than one facet, then a facet of $\Delta$ is \textbf{\emph{free}} if its intersection with $\partial \Delta$ is a simplicial $(d-2)$-ball. Our definition of a shelling illustrates how to build a complex by attaching one facet at a time in a ``nice" way. When considering a shellable pseudomanifold with boundary, it is sometimes more convenient and intuitive to consider how to take the complex apart by consecutively removing free facets, due to the result below:
\begin{proposition}\label{prop:freefacet}{\rm \cite[Proposition 2.4(iii)]{Zieglernonshell}} If $\Delta$ is a pseudomanifold and $T$ is a free facet of $\Delta$, then the geometric realization of the complex given by the union of all other facets of $\Delta$ is homeomorphic to $\lVert \Delta \rVert$.
\end{proposition}

Simplicial balls and spheres are examples of pseudomanifolds (with and without boundary, respectively). In particular, the facet-ridge graph of a simplicial $(d-1)$-sphere is connected and $d$-regular. The following result about shellable pseudomanifolds is well-known and easy to prove. The first part is originally due to Danaraj and Klee \cite{klee} (and can also be proved using Proposition \ref{prop:freefacet}).  A version of the second part appears in \cite[Chapter 8]{Ziegler}.
\begin{proposition}\label{prop:shellingwellknown}
    Let $\Delta$ be a shellable simplicial complex. 
    \begin{enumerate}
        \item If $\Delta$ is a pseudomanifold, then $\Delta$ is either a ball or a sphere.
        \item If $\Delta$ is a sphere, then the reverse of any shelling of $\Delta$ is also a shelling of $\Delta$. The restriction face of the last facet in the shelling is the facet itself. 
    \end{enumerate}
\end{proposition}
Combining both parts leads to the following observation. 
\begin{corollary}\label{cor:shellingball}
If $\Delta$ is a sphere with shelling $T_1, \dots, T_n$, then for every $1 < i \le n$, the subcomplex $\overline{T}_i \cup \cdots \cup \overline{T}_n$ is a shellable ball. 
\end{corollary}
Moreover, since the links and stars of shellable simplicial complexes are shellable, and since all links of a simplicial sphere are pseudomanifolds, we conclude: 
\begin{corollary}\label{cor:shellspherelink}
Every link of a shellable simplicial sphere is a shellable simplicial sphere. 
\end{corollary}

\begin{example}\label{ex:1dsphere} (Simplicial $1$-spheres)
A simplicial $1$-sphere $\Delta$ with $n$ vertices is an $n$-cycle. Its facets are the edges of the cycle, and an ordering $e_1, \dots, e_n$ of these edges is a shelling of $\Delta$ if and only if for every $1 \le i \le n$, the union $e_1 \cup \cdots \cup e_i$ is a connected graph. Moreover, the facet-ridge graph $G$ of $\Delta$ is isomorphic to $\Delta$. It follows immediately that Theorem \ref{thm:main} holds for (shellable) simplicial $1$-spheres. Figure \ref{fig:1dsphere} depicts $\Delta$ (in black) and $G$ (in red) when $n = 5$. We use $v_iv_j$ to denote the edge whose endpoints are the vertices $v_i$ and $v_j$. 

\end{example}
\begin{figure}[H]
\centering
\includegraphics[width=0.3\textwidth]{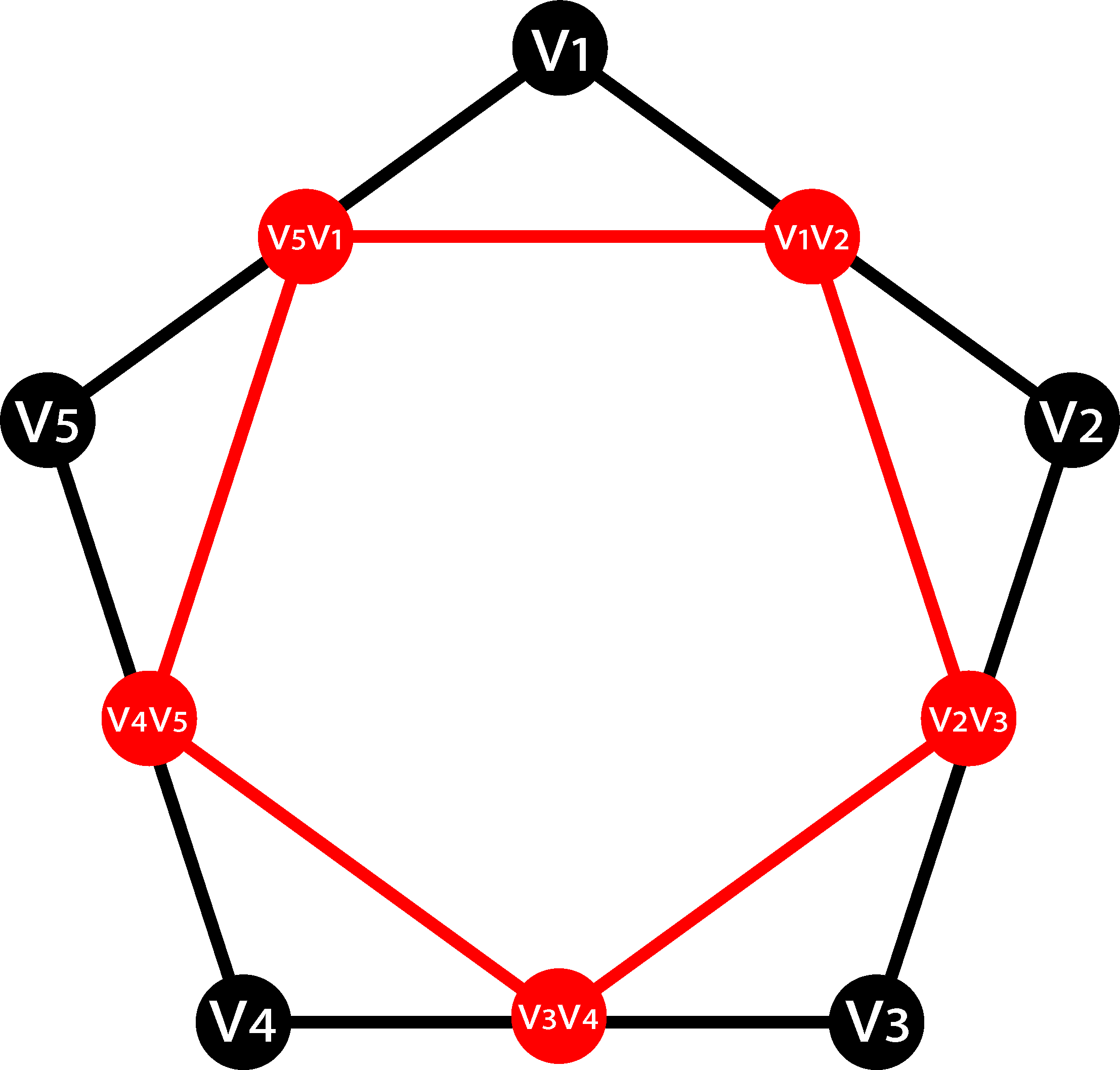}
\caption{A simplicial $1$-sphere with $5$ vertices and its facet-ridge graph.}
\label{fig:1dsphere}
\end{figure}

\section{Good acyclic orientations}\label{sec:acyclic}
Throughout this section, let $d \ge 3$ and let $G$ be the facet-ridge graph of a simplicial $(d-1)$-sphere $\Delta$. The goal of this section is to define and utilize good acyclic orientations of $G$ to obtain a shelling and a corresponding partitioning of $\Delta$. This is the first step of our reconstruction of $\Delta$. 

For any subset $W$ of the vertex set $V(G)$ of $G$, let $G[W]$ denote the subgraph of $G$ induced by $W$. For a $(d-k-1)$-face $\sigma \in \Delta$, let $\cV^\Delta_k(\sigma)$ denote the set of vertices of $G$ corresponding to the facets of $\st_\Delta \sigma$. Observe that $G[\cV^\Delta_k(\sigma)]$ is the facet-ridge graph of $\lk_\Delta \sigma$ as well as of $\st_\Delta \sigma$. In particular, if $\Delta$ is shellable, then $\lk_\Delta \sigma$ is a simplicial $(k-1)$-sphere by Corollary \ref{cor:shellspherelink}, and thus $G[\cV^\Delta_k(\sigma)]$ is $k$-regular. 

An orientation $\cO$ of $G$ is \textbf{\emph{good}} if for every $0 \le k \le d$ and $\sigma \in \Delta$ of dimension $d-k-1$, $\cO$ induces exactly one sink on $G[\cV^\Delta_k(\sigma)]$. In this paper, we focus on the good orientations that are acyclic. We invoke two simple but important facts about acyclic graphs: first, any acyclic orientation of a graph induces at least one sink on any of its induced subgraphs; second, in an acyclic graph there exists a directed path from any vertex to one of its sinks. 

\begin{notation*}
    For the rest of the paper, we use the uppercase $T$ to represent a facet of $\Delta$, and the lowercase $t$ to represent the corresponding vertex in $V(G)$. 
\end{notation*}

We now relate the existence of good acyclic orientations to shellability of the simplicial complex. 
\begin{proposition}\label{prop:orientationexists}
If $\Delta$ is shellable, then $G$ has a good acyclic orientation. 
\end{proposition}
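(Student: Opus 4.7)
The idea is to construct the orientation directly from a shelling. I would fix a shelling $T_1, \dots, T_n$ of $\Delta$ and orient every edge $\{t_i, t_j\} \in E(G)$ with $i < j$ from $t_i$ to $t_j$. This orientation is automatically acyclic since it is induced by a total order on $V(G)$, so the work lies in verifying, for every face $\sigma \in \Delta$ of dimension $d - k - 1$, that the induced subgraph $G[\cV_k^\Delta(\sigma)]$ has a unique sink.

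Fix such a $\sigma$ and list the facets of $\st_\Delta \sigma$ in the order they appear in the shelling as $T_{i_1}, \dots, T_{i_m}$ with $i_1 < \cdots < i_m$. By the fact recalled in Section~\ref{sec:prelim} that shellings pass to stars and links, this is a shelling of $\st_\Delta \sigma$, and consequently $S_j := T_{i_j} \setminus \sigma$ is a shelling of the shellable $(k-1)$-sphere $\lk_\Delta \sigma$. The vertex $t_{i_m}$ is clearly a sink of $G[\cV_k^\Delta(\sigma)]$, since every edge there incident to $t_{i_m}$ is of the form $t_{i_a} \to t_{i_m}$ with $a < m$. To conclude, I would show that no $t_{i_a}$ with $a < m$ is a sink. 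If $t_{i_a}$ were a sink, then since $\lk_\Delta \sigma$ is a $(k-1)$-sphere each of the $k$ ridges of $S_a$ sits in exactly one other facet of $\lk_\Delta \sigma$, and these $k$ facets are precisely the neighbors of $t_{i_a}$ in $G[\cV_k^\Delta(\sigma)]$. The sink hypothesis then forces every ridge, hence every proper face, of $S_a$ to already lie in $\overline{S}_1 \cup \cdots \cup \overline{S}_{a-1}$, so the restriction face $\cR(S_a)$ of $S_a$ in the induced shelling of $\lk_\Delta \sigma$ equals $S_a$ itself.

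The main obstacle is therefore the classical fact that in any shelling of a shellable $(k-1)$-sphere only the last facet $S_m$ satisfies $\cR(S_m) = S_m$. I would establish this via the $h$-vector: the number of facets $S_j$ with $|\cR(S_j)| = i$ equals $h_i$, so the number of facets satisfying $\cR(S_j) = S_j$ equals $h_k$, which for a shellable sphere equals $h_0 = 1$ by Dehn--Sommerville. Meanwhile $\cR(S_m) = S_m$ always holds, because every proper face of $S_m$ is contained in at least two facets of $\lk_\Delta \sigma$ (its link therein is a sphere of nonnegative dimension and hence has at least two facets), so some such facet is distinct from $S_m$ and therefore has smaller index. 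Consequently the unique facet with $\cR = $ self is $S_m$, forcing $a = m$ and contradicting $a < m$, which finishes the verification that the orientation is good.
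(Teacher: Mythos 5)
Your proof is correct, but it takes the \emph{reversed} orientation from the one the paper uses, and as a result it needs substantially heavier machinery. The paper orients each edge $t_it_j$ (with $i<j$) \emph{from $t_j$ to $t_i$}, i.e.\ toward the \emph{earlier} facet. With that choice, the first facet $T_{i_1}$ of $\st_\Delta\sigma$ in the shelling order is a sink of $G[\cV_k^\Delta(\sigma)]$, and any other $T_{i_a}$ ($a>1$) is not: by the very definition of a shelling, $\overline{S}_a\cap(\overline{S}_1\cup\cdots\cup\overline{S}_{a-1})$ is pure of codimension $1$ and nonempty, so $S_a$ shares a ridge with some earlier facet, giving $t_{i_a}$ an outgoing edge. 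Nothing beyond the definition of a shelling is needed.

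You instead orient toward the \emph{later} facet, so the candidate unique sink is the \emph{last} facet $T_{i_m}$. To rule out earlier facets $T_{i_a}$ being sinks, you must show that $\cR(S_a)=S_a$ forces $a=m$, which you do via the $h$-vector: $h_k$ counts facets with $\cR=\,$self, and $h_k=h_0=1$ by Dehn--Sommerville for the $(k-1)$-sphere $\lk_\Delta\sigma$. That argument is correct, but it genuinely uses that $\lk_\Delta\sigma$ is a sphere (so Dehn--Sommerville applies), whereas the paper's argument applies verbatim to any shellable complex. So the two proofs are not equivalent up to reversal: your orientation is the reverse of the paper's, but ``good'' is a sink condition, not preserved under reversal, and your route to uniqueness of sinks is a real extra step resting on the sphere hypothesis. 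The paper's choice buys a one-line verification; yours costs a Dehn--Sommerville argument but, as a byproduct, shows that the reverse orientation of the paper's is also good.
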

\begin{proof}
    Suppose $T_1, \dots, T_n$ is a shelling of $\Delta$. For every pair of $T_i, T_j$ such that $i < j$ and $T_i \cap T_j$ is a ridge, orient the edge $t_it_j$ from $t_j$ to $t_i$. The resulting orientation of $G$ must be acyclic with the unique sink $t_1$. Furthermore, since the shelling of $\Delta$ induces a shelling on every star of $\Delta$, this orientation is good. 
\end{proof}

The converse of Proposition \ref{prop:orientationexists} also holds. The crucial beginning step of our construction is the following result.

\begin{proposition}\label{prop:reconstructshelling}
    Let $\cO$ be a good acyclic orientation of $G$. Assume $|V(G)| = n$. Let $t_1$ be the unique sink of $G$ with respect to $\cO$. Now define $t_2,\dots, t_n$ recursively by taking $t_i$ to be a sink of $G[V(G) \setminus \{t_1, \dots, t_{i-1}\})]$ for each $2 \le i \le n$. Then $T_1, \dots, T_n$ is a shelling of $\Delta$.
\end{proposition}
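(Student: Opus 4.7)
The idea is to verify that the ordering $T_1,\dots,T_n$ coming from successively removing sinks satisfies the shelling condition by using the following single key observation: for every face $\sigma$ of $\Delta$, the first facet in $\st_\Delta\sigma$ to appear in the ordering must be the unique sink of the induced subgraph on $\cV^\Delta_k(\sigma)$. Once this is established, a path-tracing argument in the acyclic orientation produces the required ridge.

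First I would dispatch the preliminaries. Taking $\sigma = \emptyset$ and $k = d$ in the definition of goodness gives that $G$ itself has a unique sink, so $t_1$ is well-defined. At each subsequent step, $G[V(G) \setminus \{t_1,\dots,t_{i-1}\}]$ inherits an acyclic orientation from $\cO$ and is therefore guaranteed to admit at least one sink, so the construction runs to completion and every vertex of $G$ is listed exactly once.

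Next I would record the key observation. Fix $\sigma \in \Delta$ and let $k = d - 1 - \dim\sigma$. Among $\{t_\ell : T_\ell \in \st_\Delta \sigma\}$, let $t_p$ be the one picked earliest. Since $t_p$ was chosen as a sink of $G[\{t_p, t_{p+1}, \dots, t_n\}]$ and every other vertex of $\cV^\Delta_k(\sigma)$ lies in $\{t_{p+1},\dots,t_n\}$, every edge of $G[\cV^\Delta_k(\sigma)]$ incident to $t_p$ is directed into $t_p$. Hence $t_p$ is a sink of $G[\cV^\Delta_k(\sigma)]$, and by goodness it is the unique sink. Symmetrically, I would note the dual fact used below: whenever an edge $t_it_{i'}$ of $G$ is oriented as $t_i \to t_{i'}$ under $\cO$, we must have $i' < i$, because $t_i$ is a sink of $G[\{t_i,\dots,t_n\}]$ so all its outgoing edges must go to previously listed vertices.

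Finally I would verify that $T_1,\dots,T_n$ is a shelling. Fix $i \ge 2$ and a face $\sigma \subseteq T_i$ that also lies in some $T_j$ with $j<i$; I must produce a ridge $\rho$ of $T_i$ contained in some $T_{i'}$ with $i'<i$ such that $\sigma \subseteq \rho$. Since $T_i \ne T_j$, we have $\dim\sigma \le d-2$, so setting $k = d - 1 - \dim\sigma$ gives $k \ge 1$. Both $t_i$ and $t_j$ belong to $\cV^\Delta_k(\sigma)$, and by the key observation the unique sink $t_{i^*}$ of $G[\cV^\Delta_k(\sigma)]$ satisfies $i^* \le j < i$. The restriction of $\cO$ to $G[\cV^\Delta_k(\sigma)]$ is acyclic with unique sink $t_{i^*}$, so there is a directed path from $t_i$ to $t_{i^*}$; let its first edge be $t_i \to t_{i'}$. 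This edge lies in $G$, so $\rho := T_i \cap T_{i'}$ is a ridge; since $T_{i'} \in \st_\Delta\sigma$ we have $\sigma \subseteq \rho$; and by the dual fact above, the orientation $t_i \to t_{i'}$ forces $i' < i$. This completes the shelling criterion.

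The step I expect to require the most care is the key observation identifying the earliest-picked facet of $\st_\Delta\sigma$ with the unique sink of $G[\cV^\Delta_k(\sigma)]$; everything else is bookkeeping with the two features of acyclic orientations highlighted right before the statement (existence of sinks in induced subgraphs, and existence of directed paths to sinks).
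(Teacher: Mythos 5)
Your proof is correct, and it takes a genuinely different route from the paper's. The paper proves the proposition by first passing through the restriction map $\cR^\cO$ and the induced partitioning $\bigsqcup_i [\cR^\cO(T_i),T_i]$, then introducing an auxiliary digraph $G_{\cR^\cO}$ and invoking a theorem of Hachimori to conclude that linear extensions of $G_{\cR^\cO}$ are shellings; the remaining work is to check that the sink-peeling order is one such linear extension. You instead verify the shelling condition directly: your ``key observation'' (the earliest-peeled vertex of $\cV^\Delta_k(\sigma)$ is forced to be \emph{the} unique sink of $G[\cV^\Delta_k(\sigma)]$ by goodness) plus the observation that every edge leaving $t_i$ points to a smaller index together let you trace a directed path in $G[\cV^\Delta_k(\sigma)]$ to produce, for any $\sigma\subseteq T_i$ shared with an earlier facet, a ridge $T_i\cap T_{i'}$ with $i'<i$ containing $\sigma$. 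This is more self-contained and elementary, whereas the paper's argument has the advantage of producing $\cR^\cO$ and the associated partitioning explicitly, objects that are reused later (e.g.\ in Proposition~\ref{prop:kframecorrespondence} and Lemma~\ref{lem:restriction}). One small bookkeeping point worth keeping in mind: for $i\ge 2$ you should also confirm the intersection $\overline{T}_i\cap(\overline{T}_1\cup\cdots\cup\overline{T}_{i-1})$ is nonempty, but this falls out of your own argument by taking $\sigma=\emptyset$ (so $k=d$), which yields a ridge $T_i\cap T_{i'}$ with $i'<i$.
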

This proposition follows easily from \cite[Theorem 3]{Hachimori}. For completeness, we outline the proof using the terminology of our paper. 
\begin{proof}
Let $t_j \xrightarrow{\cO} t_i$ denote the directed edge from $t_j$ to $t_i$ with respect to $\cO$. For every $1 \le i \le n$, define 
\[
\cR^\cO(T_i) := \bigcap_{j: \ t_j \xrightarrow{\cO} t_i} (T_i \cap T_j) = T_i \cap \left(\bigcap_{j: \ t_j \xrightarrow{\cO} t_i} T_j\right). 
\]
Since $\cO$ is a good orientation, $\bigsqcup_{i=1}^n [\cR^\cO(T_i), T_i]$ is a partitioning of $\Delta$. Associate with this partitioning a (different) digraph $G_{\cR^\cO}$ with the vertex set $V(G)$ as follows: draw an edge directed from $t_i$ to $t_j$ if and only if $\cR^\cO(T_j) \subseteq T_i$. It can be shown that $\cO$ being an acyclic orientation of $G$ implies that $G_{\cR^\cO}$ is acyclic. The linear extensions of $G_{\cR^\cO}$ are shellings of $\Delta$ with the restriction map $\cR^\cO$. For more details on this part of the proof, the reader is encouraged to check \cite{Hachimori}. 

It remains to show that the ordering $t_1, \dots, t_n$ defined in the statement of the proposition is a linear extension of $G_{\cR^\cO}$. That is, if $i < j$, then $\cR^\cO(T_j) \nsubseteq T_i$. Let $1 \le j \le n$ and assume $\dim \cR^\cO(T_j) = d-k-1$. The face $\cR^\cO(T_j)$ is the intersection of $k$ distinct ridges $T_j \cap T_\ell$ in $T_j$ where $t_\ell \xrightarrow{\cO} t_j$ is a directed edge in $G$, so $t_j$ has indegree $k$ in $G$. Therefore, $t_j$ is the unique sink of the $k$-regular graph $G[\cV^\Delta_k (\cR^\cO(T_j))]$. Therefore, if $i < j$, then $t_i \notin \cV^\Delta_k (\cR^\cO(T_j))$. 
\end{proof}

The following example elucidates the recursive defining steps described in the statement of Proposition \ref{prop:reconstructshelling}.

\begin{figure}[H]
\centering
\includegraphics[width=0.95\textwidth]{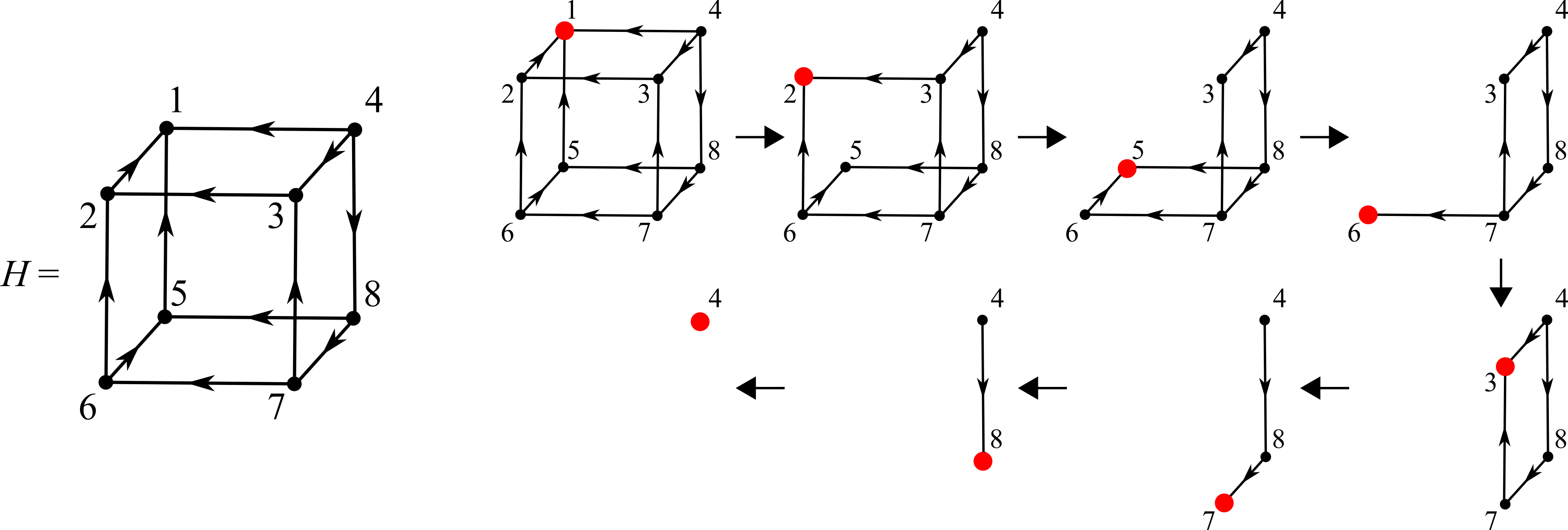}
\caption{The facet-ridge graph of the boundary of the octahedron equipped with a good acyclic orientation, and an ordering of its vertices satisfying the hypothesis of Proposition \ref{prop:reconstructshelling}.}
\label{fig:octahedrongraph}
\end{figure}

\begin{example}\label{ex:octahedrongraph}
Let $\Gamma$ be the boundary of the octahedron and let $H$ be its facet-ridge graph, labeled and equipped with a good acyclic orientation as indicated in Figure \ref{fig:octahedrongraph}. The eight graphs on the right are the induced subgraphs $H[V(H) \setminus \{t_1, \dots, t_{i-1}\})]$ for $1 \le i \le 8$; the chosen sink $t_i$ and the edges that it is incident to at each step are highlighted in red. Note that except for the first step, the choice of sink at each step is not necessarily unique. The particular choices made here give rise to the ordering $1, 2, 5, 6, 3, 7, 8, 4$. By Proposition \ref{prop:reconstructshelling}, this ordering of $V(H)$ corresponds to a shelling of $\Gamma$.
\end{example}

Suppose that $\Delta$ is shellable. Proposition \ref{prop:orientationexists} then implies that $G$ has a good acyclic orientation. To be able to use Proposition \ref{prop:reconstructshelling} with only the information of the facet-ridge graph $G$, it is essential to characterize the good acyclic orientations of $G$ intrinsically. For the case of simple $d$-polytopes, Kalai \cite{Kalai} provided an ingenious method to find these good acyclic orientations. Here we extend Kalai's method to the case of shellable simplicial $(d-1)$-spheres. Let $\cO$ be an acyclic orientation of $G$. Let $h_k^\cO$ be the number of vertices of $G$ with indegree $k$ with respect to $\cO$. Define 
\[
f^\cO := h_0^\cO + 2h_1^\cO + \cdots + 2^kh_k^\cO + \cdots + 2^dh_d^\cO.
\]

Recall our hypothesis that $\Delta$ is a shellable simplicial $(d-1)$-sphere with the facet-ridge graph $G$. Let $f(\Delta)$ denote the total number of faces of $\Delta$. 
\begin{proposition}\label{prop:findgoodorientation}
Let $M = \min \{f^\cO: \cO \text{ is an acyclic orientation of } G\}$. Then $M = f(\Delta)$, and $\cO$ is good if and only if $f^\cO = M$. 
\end{proposition}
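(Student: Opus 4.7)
The plan is to adapt Kalai's double-counting argument for simple polytopes to the simplicial sphere setting. The target identity is
\[
f^\cO \;=\; \sum_{\sigma \in \Delta} \#\bigl\{\text{sinks of } G[\cV^\Delta_{d-\dim\sigma-1}(\sigma)]\bigr\},
\]
where the sum ranges over all faces of $\Delta$, including the empty face (so that $f(\Delta)=1+f_0+\cdots+f_{d-1}$ under the standard convention). Once this identity is established, the proposition follows quickly: since $\cO$ is acyclic, its restriction to every induced subgraph $G[\cV^\Delta_k(\sigma)]$ is also acyclic and hence has at least one sink. This forces $f^\cO \ge f(\Delta)$, with equality precisely when every such subgraph has exactly one sink, i.e.\ precisely when $\cO$ is good. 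Combined with Proposition~\ref{prop:orientationexists}, which supplies a good acyclic orientation since $\Delta$ is shellable, this shows that $M = f(\Delta)$ is attained exactly by the good acyclic orientations.

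To prove the identity, I would double count the set of pairs $(t, S)$, where $t$ is a vertex of $G$ (corresponding to a facet $T$) and $S$ is a subset of the in-edges incident to $t$ under $\cO$. Summing over $t$ yields $\sum_k 2^k h_k^\cO = f^\cO$. For the other side, to each pair associate the face
\[
\sigma(t, S) \;=\; T \cap \bigcap_{(t_\ell \xrightarrow{\cO} t) \in S} T_\ell,
\]
with the convention $\sigma(t, \emptyset) = T$. Since the $d$ ridges of $T$ correspond bijectively to the $d$ vertices of $T$ (each ridge being obtained by deleting one vertex), any $k$ of these ridges intersect in a subface of $T$ of dimension $d-k-1$.

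The crux is to verify that, for each face $\sigma$ of dimension $d-k-1$, the pairs $(t, S)$ with $\sigma(t, S) = \sigma$ are in bijection with the sinks of $G[\cV^\Delta_k(\sigma)]$. In one direction, if $\sigma(t, S) = \sigma$, then $T$ contains $\sigma$, so $t \in \cV^\Delta_k(\sigma)$; moreover, the edges in $S$ must correspond precisely to the $k$ ridges of $T$ containing $\sigma$, which are in turn precisely the $k$ edges of $G[\cV^\Delta_k(\sigma)]$ incident to $t$. Since all of these edges are in-edges under $\cO$, the vertex $t$ is forced to be a sink of $G[\cV^\Delta_k(\sigma)]$. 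Conversely, each sink $t$ of this subgraph determines a unique such pair by taking $S$ to be the set of its (necessarily incoming) edges inside $G[\cV^\Delta_k(\sigma)]$. Checking this bijection carefully is the main obstacle; once it is in place, summing over $\sigma$ yields the identity and the proposition follows.
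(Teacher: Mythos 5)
Your proposal is correct and takes essentially the same approach as the paper: both use Kalai's double-counting of pairs consisting of a vertex $t$ and a subset of its in-edges, identifying each such pair with a face $\sigma$ for which $t$ is a sink of $G[\cV^\Delta_k(\sigma)]$, then invoking acyclicity to get $f^\cO \ge f(\Delta)$ and Proposition~\ref{prop:orientationexists} to show the bound is attained. You spell out the bijection more explicitly than the paper does, but the argument is the same.
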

\begin{proof}
    Let $t$ be a vertex of $G$ of indegree $k$ with respect to $\cO$. For any $i$ edges directed into $t$, denote the other endpoints of these edges by $t^1, \dots, t^i$. Then $t$ is a sink of $G[\cV^\Delta_i(T \cap T^1 \cap \cdots \cap T^i)]$. This means $t$ is a sink in $2^k$ subgraphs of $G$ induced by the stars of $\Delta$. On the other hand, since $\cO$ is acyclic, for each $(d-i-1)$-face of $\Delta$, the subgraph induced by its star has at least one sink $t'$ with neighbors $t'^1, \dots, t'^i$, so this subgraph is $G[\cV^\Delta_i(T' \cap T'^1 \cap \cdots \cap T'^i)]$. It follows that $f^\cO \ge f(\Delta)$ and $\cO$ is good if and only if $f^\cO = f(\Delta) = M$. 
\end{proof}

Consequently, to distinguish the good acyclic orientations from all other orientations of $G$, it suffices to compute $f^\cO$ for each acyclic orientation $\cO$ of $G$ and pick out those orientations that attain the minimum.

\section{$k$-frames and $k$-systems} \label{sec:ksystem}
Throughout this section, let $d \ge 3$ and let $G$ be the facet-ridge graph of a shellable simplicial $(d-1)$-sphere $\Delta$. Recall that we use $T$ to represent a facet of $\Delta$, and $t$ to represent the corresponding vertex in $V(G)$. The definitions of $k$-frames and $k$-systems were introduced in \cite{Joswig}. 

Let $0 \le k \le d$. A \textbf{\emph{$k$-frame}} of $G$ is a (not necessarily induced) subgraph of $G$ isomorphic
to the complete bipartite graph $K_{1,k}$; the vertex of degree $k$ in this subgraph is called the \textbf{\emph{root}} of the $k$-frame. A $0$-frame is just a vertex, and a $1$-frame is an edge. In Example \ref{ex:octahedrongraph}, the union of the two (undirected) edges $37, 67$ is a $2$-frame of $H$ rooted at $7$. We denote by $\langle t, t^1, \dots, t^k\rangle$ the $k$-frame with root $t$ and $t^1, \dots, t^k$ the $k$ neighbors of $t$.

Every $k$-frame $\langle t, t^1, \dots, t^k \rangle$ of $G$ determines a $(d-k-1)$-face $T \cap T^1 \cap \cdots \cap T^k$ of $\Delta$. Moreover, if $\sigma$ is a $(d-k-1)$-face of $\Delta$, then the $k$-frames giving rise to $\sigma$ are exactly all of the $k$-frames contained in the $k$-regular graph $G[\cV^\Delta_k(\sigma)]$. Therefore, for $1 \le k \le d$, the $(d-k-1)$-faces of $\Delta$ are \emph{not} in one-to-one correspondence with the $k$-frames of $G$. To achieve a bijection, fix a good acyclic orientation $\cO$ of $G$. Then $\cO$ induces a unique sink on $G[\cV^\Delta_k(\sigma)]$. We can thus consider the $k$-frame rooted at that sink. This observation leads to the proposition below. 

\begin{proposition}\label{prop:kframecorrespondence}
    Let $\cO$ be a good acyclic orientation of $G$ and let $0 \le k \le d$. Then the set of $(d-k-1)$-faces of $\Delta$ is in bijection with the set of $k$-frames of $G$ whose roots have indegree $k$ in their respective $k$-frames with respect to $\cO$. 
\end{proposition}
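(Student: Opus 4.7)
My plan is to exhibit mutually inverse maps between the $(d-k-1)$-faces of $\Delta$ and the $k$-frames whose roots have indegree $k$, i.e., $k$-frames all of whose $k$ edges are oriented into the root by $\cO$. For the forward direction, given a $(d-k-1)$-face $\sigma$, I would use that $G[\cV^\Delta_k(\sigma)]$ is the facet-ridge graph of $\lk_\Delta \sigma$, a shellable simplicial $(k-1)$-sphere, hence $k$-regular. Since $\cO$ is good, this subgraph has a unique sink $t$; being a sink in a $k$-regular graph forces all $k$ of its incident edges there to be in-oriented. Letting $t^1,\dots,t^k$ denote the other endpoints of these edges, I would assign to $\sigma$ the $k$-frame $\{t,t^1,\dots,t^k\}$.

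For the reverse map, given such a $k$-frame $\{t,t^1,\dots,t^k\}$, I would assign the face $T \cap T^1 \cap \cdots \cap T^k$. Each $T \cap T^i$ is a ridge of $T$, necessarily of the form $T \setminus \{v_i\}$ for a unique $v_i \in T$; the $v_i$ are pairwise distinct because the $t^i$ (hence the $T^i$) are, so this intersection is $(d-k-1)$-dimensional.

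To check the two maps are mutual inverses: starting from $\sigma$, the facets $T,T^1,\dots,T^k$ all belong to $\st_\Delta \sigma$, giving $\sigma \subseteq T \cap T^1 \cap \cdots \cap T^k$, with equality forced by a dimension count. Conversely, starting from a $k$-frame and setting $\sigma := T \cap T^1 \cap \cdots \cap T^k$, the vertices $t,t^1,\dots,t^k$ all lie in $\cV^\Delta_k(\sigma)$, and the $k$ edges $t^it$ sit inside the $k$-regular subgraph $G[\cV^\Delta_k(\sigma)]$ all oriented into $t$; hence $t$ is a sink in that subgraph, necessarily the unique one by goodness of $\cO$, and its $k$ neighbors in the subgraph must then be exactly $t^1,\dots,t^k$.

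I do not anticipate a serious obstacle. The proof combines three ingredients already available: links in a shellable simplicial sphere are themselves shellable simplicial spheres (so $G[\cV^\Delta_k(\sigma)]$ is $k$-regular), a sink in a $k$-regular graph has all its edges in-oriented, and goodness of $\cO$ pins the sink down uniquely. The only point that needs care is interpreting ``root has indegree $k$'' as referring to the $k$ edges of the frame itself rather than to the indegree of the root in the whole graph $G$; once parsed this way, the equivalence between the two descriptions of the root (unique sink of the star-subgraph versus root of an in-oriented frame) is essentially immediate.
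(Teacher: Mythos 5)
Your proof is correct, and it arrives at the same bijection as the paper: the root is the unique sink of $G[\cV^\Delta_k(\sigma)]$ (a $k$-regular graph via the link of $\sigma$), and the $k$ neighbors are the other endpoints of its in-edges inside that subgraph. The route is slightly different in emphasis. The paper's proof leans on Proposition~\ref{prop:reconstructshelling}: it invokes the partitioning $\bigsqcup_i [\cR^\cO(T_i), T_i]$ to produce the unique facet $T$ with $\sigma \in [\cR^\cO(T), T]$, writes $\sigma = T \cap T^1 \cap \cdots \cap T^k$ with $t^i \to t$, and then notes that $t$ is the unique sink of $G[\cV^\Delta_k(\sigma)]$ for uniqueness; the inverse map is left implicit. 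You instead exhibit both maps explicitly and verify mutual inversion directly, bypassing the partitioning. What you gain is a more self-contained argument that uses only the $k$-regularity of $G[\cV^\Delta_k(\sigma)]$ and the goodness of $\cO$; what the paper's route buys is that the correspondence is immediately phrased in terms of the Boolean intervals of the partitioning, which is the language used in the rest of the reconstruction. Your parsing of ``roots have indegree $k$'' as a condition on the $k$ edges of the frame itself (all in-oriented) rather than the root's indegree in $G$ is indeed the correct one: the root can have indegree greater than $k$ in $G$ (for instance, the global sink $t_1$ always has indegree $d$), so the literal reading would not yield a bijection, and the count $\sum_j h_j \binom{j}{k} = f_{d-k-1}$ (which uses Dehn--Sommerville) only matches under your reading.
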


Therefore, knowing the facet-ridge graph $G$ of $\Delta$ (and nothing else), we can uniquely express every face $\sigma \in \Delta$ with a certain $(d - \dim \sigma -1)$-frame of $G$. This gives rise to the following definition. 

\begin{definition}
    Let $\cO$ be a good acyclic orientation of $G$. Let $\sigma$ be a $(d-k-1)$-face of $\Delta$ and let $\langle t, t^1, \dots, t^k \rangle$ be the unique $k$-frame representing $\sigma$ such that the root has indegree $k$ in the $k$-frame. Then we say that $\langle t, t^1, \dots, t^k \rangle$ is the \textbf{\emph{principal $k$-frame}} representing $\sigma$ (with respect to $\cO$). 
\end{definition}

Observe that the root $t$ of the principal frame representing a face $\sigma$ corresponds exactly to the unique facet $T$ such that $\sigma \in [\cR^\cO(T), T]$ as defined in Proposition \ref{prop:reconstructshelling}. Therefore, we can readily list all faces of $\Delta$: given a good acyclic orientation $\cO$ (which can be found using Proposition \ref{prop:findgoodorientation}), we use Hasse diagrams to represent the Boolean intervals $[\cR^\cO(T_i), T_i]$ and represent the faces in each interval by the principal frames.

\begin{example}\label{ex:octahedronpartition} (Example \ref{ex:octahedrongraph} continued)
In Example \ref{ex:octahedrongraph}, we found an ordering $1, 2, 5, 6, 3, 7, 8, 4$ of $V(H)$ that corresponds to a shelling of $\Gamma$. The resulting partitioning consists of the intervals in Figure \ref{fig:octahedronpartition}. For $0 \le k \le 3$, every $(2-k)$-face of $\Gamma$ is represented by its principal $k$-frame. The roots of these $k$-frames are highlighted in red. 

Consider the $0$-face represented by $\langle 1, 2, 4 \rangle$ in the first interval. We know that it is contained in the facets of $\Gamma$ corresponding to the vertices $1$, $2$, and $4$. However, with only the facet-ridge graph at our disposal, we do not yet know which additional facets contain this face. On the other hand, for the faces in gray shaded boxes, which are the facets, ridges, and the empty face of $\Gamma$, we know exactly which facets contain them. 
\begin{figure}[H]
\centering
\includegraphics[width=0.95\textwidth]{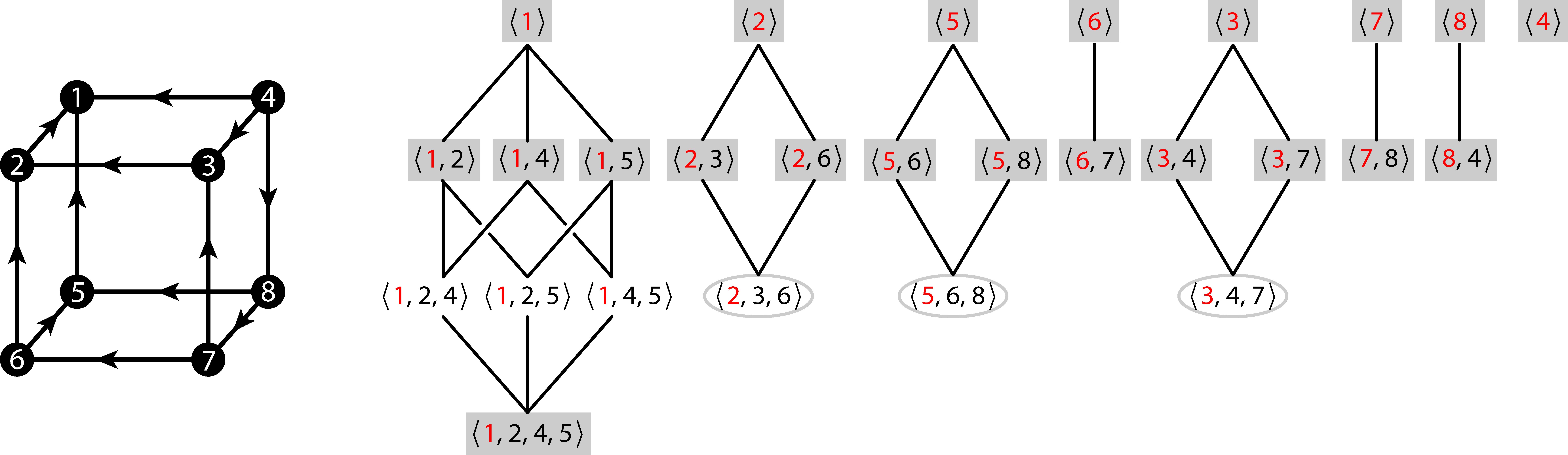}
\caption{A partitioning of the boundary of the octahedron derived from a good acyclic orientation of its facet-ridge graph.}
\label{fig:octahedronpartition}
\end{figure}
\end{example}

A set $\cS$ of subsets of $V(G)$ is a \textbf{\emph{$k$-system}} of the (undirected) graph $G$ if for every $S \in \cS$, the induced subgraph $G[S]$ is $k$-regular and every vertex set of a $k$-frame of $G$ is contained in a unique set from $\cS$. In Example \ref{ex:octahedronpartition}, $\{\{1, 2, 3, 4\}, \{1, 2, 5, 6\}, \{1, 4, 5, 8\}, \{2, 3, 6, 7\}, \{3, 4, 7, 8\}, \{5, 6, 7, 8\}\}$ is a $2$-system of $H$. Note that the definition of a $k$-system of $G$ depends only on $G$ and $k$. 
\begin{notation*}
For a $k$-system $\cS$ and a $k$-frame $\langle t, t^1, \dots, t^k \rangle$, let $\cS(\langle t, t^1, \dots, t^k \rangle)$ denote the unique set from $\cS$ that contains $\{t, t^1, \dots, t^k\}$. If additionally a good acyclic orientation $\cO$ of $G$ is specified, and $\langle t, t^1, \dots, t^k \rangle$ is the principal $k$-frame representing a $(d-k-1)$-face $\sigma \in \Delta$, then we use the symbol $\cS(\sigma)$ interchangeably with $\cS(\langle t, t^1, \dots, t^k \rangle)$. 
\end{notation*}

\begin{remark*}
    Reconstructing $\Delta$ is equivalent to recognizing the facets of $\st_\Delta \sigma$ for all $(d-k-1)$-faces $\sigma \in \Delta$ for all $0 \le k \le d$. As mentioned in Example \ref{ex:octahedronpartition}, the facet-ridge graph $G$ immediately tells us the facets of the stars of all $(d-1)$-, $(d-2)$-, and $(-1)$-faces of $\Delta$. This is also reflected by the fact that $G$ has a unique $k$-system for $k \in \{0, 1, d\}$. Therefore, for the rest of the paper, we mainly focus on the $k$-frames and $k$-systems for $2 \le k \le d-1$.
\end{remark*}

Let $2 \le k \le d-1$. So far, we have set up a correspondence between the $(d-k-1)$-faces of $\Delta$ and the $k$-frames of $G$. Lemma \ref{lem:vk} below will establish that the set of all stars of the $(d-k-1)$-faces corresponds to a $k$-system of $G$. Therefore, to recover these stars, we only need to search among the $k$-systems. The next two definitions describe the particular $k$-systems that we wish to examine. 

\begin{definition}\label{def:starlike}
    Let $\cS_k$ be a $k$-system of $G$. Then $\cS_k$ is \textbf{\emph{star-like}} if every good acyclic orientation $\cO$ induces exactly one sink on $G[S]$ for every $S \in \cS_k$.
\end{definition}

Before introducing another definition, we make the following observation.
\begin{lemma}\label{lem:starlike}
    Let $\cS_k$ be a star-like $k$-system and fix a good acyclic orientation $\cO$ of $G$. Then $\cS_k = \{\cS_k(\sigma): \dim \sigma = d-k-1\}$. In particular, two star-like $k$-systems are identical if and only if they agree on all principal $k$-frames representing the $(d-k-1)$-faces of $\Delta$. 
\end{lemma}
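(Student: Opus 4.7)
The plan is to prove the set equality $\cS_k = \{\cS_k(\sigma) : \dim \sigma = d-k-1\}$ by establishing both containments, and then to derive the second sentence as an immediate corollary. The containment $\{\cS_k(\sigma) : \dim \sigma = d-k-1\} \subseteq \cS_k$ is immediate from the very definition of $\cS_k(\sigma)$: by construction, each $\cS_k(\sigma)$ is a member of $\cS_k$.

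For the reverse inclusion $\cS_k \subseteq \{\cS_k(\sigma) : \dim \sigma = d-k-1\}$, I fix an arbitrary $S \in \cS_k$ and aim to exhibit a $(d-k-1)$-face $\sigma$ of $\Delta$ with $\cS_k(\sigma) = S$. The crucial step is to invoke the star-like hypothesis: since $G[S]$ is $k$-regular and $\cS_k$ is star-like, $\cO$ induces a unique sink $t$ on $G[S]$. Let $t^1, \dots, t^k$ denote the $k$ neighbors of $t$ inside $G[S]$; because $t$ is a sink of $G[S]$, each edge $tt^i$ is oriented $t^i \xrightarrow{\cO} t$. Thus $\{t, t^1, \dots, t^k\}$ is a $k$-frame of $G$ whose leaves are all in-neighbors of its root, which by Proposition \ref{prop:kframecorrespondence} identifies it as the principal $k$-frame representing the $(d-k-1)$-face $\sigma := T \cap T^1 \cap \cdots \cap T^k$. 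Because the vertex set of this $k$-frame lies inside $S$, the defining property of a $k$-system forces $\cS_k(\{t, t^1, \dots, t^k\}) = S$; and by the definition of $\cS_k(\sigma)$ this same set equals $\cS_k(\sigma)$, so $S = \cS_k(\sigma)$ as required.

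The second sentence follows immediately. If two star-like $k$-systems $\cS_k$ and $\cS_k'$ satisfy $\cS_k(\sigma) = \cS_k'(\sigma)$ for every $(d-k-1)$-face $\sigma$, then applying the first assertion to both yields
\[
\cS_k = \{\cS_k(\sigma) : \dim \sigma = d-k-1\} = \{\cS_k'(\sigma) : \dim \sigma = d-k-1\} = \cS_k',
\]
while the converse direction is trivial. The only real obstacle in the argument is recognizing that the unique sink $t$ of $G[S]$ automatically yields a $k$-frame within $S$ of exactly the form required to invoke Proposition \ref{prop:kframecorrespondence} — namely, one all of whose leaves are in-neighbors of its root. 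Once this observation is in hand, the rest is routine unpacking of the definitions of $k$-system, principal $k$-frame, and $\cS_k(\sigma)$.
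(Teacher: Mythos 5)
Your proof is correct and follows essentially the same approach as the paper: the key step in both is observing that the unique sink $t$ of $G[S]$ together with its $k$ neighbors in $G[S]$ (all necessarily in-neighbors of $t$) forms the principal $k$-frame of some $(d-k-1)$-face $\sigma$, whence $S = \cS_k(\sigma)$. The paper's proof additionally records that the map $\sigma \mapsto \cS_k(\sigma)$ is injective, but, as you implicitly recognize, only surjectivity onto $\cS_k$ is needed for the stated set equality and its corollary.
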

\begin{proof}
    If $\sigma$ and $\sigma'$ are distinct $(d-k-1)$-faces of $\Delta$, then $\cS_k(\sigma) \ne \cS_k(\sigma')$. This is because the root of the principal $k$-frame representing $\sigma$ ($\sigma'$, respectively) is the unique sink of $G[\cS_k(\sigma)]$ ($G[\cS_k(\sigma')]$, respectively). On the other hand, for every $S \in \cS$, the unique sink $t$ of $G[S]$ and its $k$-neighbors in $G[S]$ form a principal $k$-frame representing some $(d-k-1)$-face $\sigma$. 
\end{proof}

\begin{definition}\label{def:compatible}
    Let $\{\cS_k\}_{2 \le k \le d-1}$ be a family of $k$-systems of $G$. Then $\{\cS_k\}_{2 \le k \le d-1}$ is \textbf{\emph{compatible}} if for every $(k-1)$-frame $\Phi_{k-1}$ that is a subset of a $k$-frame $\Phi_k$, the inclusion $\cS_{k-1}(\Phi_{k-1}) \subseteq \cS_k(\Phi_k)$ holds.
\end{definition}

For $2 \le k \le d-1$, define 
\[
\cV^\Delta_k := \{\cV^\Delta_k(\sigma): \dim \sigma = d-k-1\}.
\]
This set corresponds to the stars of all $(d-k-1)$-faces of $\Delta$. 
\begin{lemma}\label{lem:vk}
     For every $2 \le k \le d-1$, $\cV^\Delta_k$ is a $k$-system of $G$. Moreover, $\cV^\Delta_k$ is star-like for every $k$ and the family $\{\cV^\Delta_k\}_{2 \le k \le d-1}$ is compatible. 
\end{lemma}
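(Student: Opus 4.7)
The plan is to verify in sequence the three assertions of the lemma: that $\cV^\Delta_k$ is a $k$-system of $G$, that each $\cV^\Delta_k$ is star-like, and that the family $\{\cV^\Delta_k\}_{2 \le k \le d-1}$ is compatible.

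First, to show $\cV^\Delta_k$ is a $k$-system, I would verify both $k$-regularity and the unique-containment property. For $k$-regularity: for any $(d-k-1)$-face $\sigma$, the induced subgraph $G[\cV^\Delta_k(\sigma)]$ is the facet-ridge graph of $\lk_\Delta \sigma$, which is a shellable $(k-1)$-sphere; this was already observed at the start of Section \ref{sec:acyclic}, and such a facet-ridge graph is $k$-regular. For unique containment, given a $k$-frame $\{t, t^1, \dots, t^k\}$, each edge $tt^i$ corresponds to a ridge $T \cap T^i = T \setminus \{v_i\}$ for some $v_i \in T$. The distinctness of the facets $T^i$ forces the ridges $T \cap T^i$ to be pairwise distinct (since each ridge of $\Delta$ lies in exactly two facets), so the vertices $v_i$ are pairwise distinct. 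Consequently $\sigma := T \cap T^1 \cap \cdots \cap T^k = T \setminus \{v_1, \dots, v_k\}$ is a face of dimension exactly $d-k-1$, and by construction $\{t, t^1, \dots, t^k\} \subseteq \cV^\Delta_k(\sigma)$. For uniqueness, if $\{t, t^1, \dots, t^k\} \subseteq \cV^\Delta_k(\sigma')$ for some $(d-k-1)$-face $\sigma'$, then $\sigma' \subseteq T \cap T^1 \cap \cdots \cap T^k = \sigma$, and equality of dimensions forces $\sigma' = \sigma$, so $\cV^\Delta_k(\sigma') = \cV^\Delta_k(\sigma)$.

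The star-like property is then immediate from the definition of a good acyclic orientation, which requires $\cO$ to induce exactly one sink on $G[\cV^\Delta_k(\sigma)]$ for every face $\sigma$ of dimension $d-k-1$. For compatibility, I would take a $(k-1)$-frame $\Phi_{k-1}$ whose vertex set is contained in that of a $k$-frame $\Phi_k$, and let $\tau$ and $\sigma$ be the faces they represent. Because $\tau$ is the intersection of the facets indexed by $V(\Phi_{k-1})$ while $\sigma$ is the intersection over the larger set $V(\Phi_k)$, we get $\tau \supseteq \sigma$. Hence any facet containing $\tau$ also contains $\sigma$, giving $\cV^\Delta_{k-1}(\tau) \subseteq \cV^\Delta_k(\sigma)$, as required.

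The only step demanding any real care is the dimension count in the first part, which crucially uses the sphere property that every ridge lies in exactly two facets; this is what lets us translate distinctness of the $T^i$ into distinctness of the deleted vertices $v_i$ and thereby pin down $\dim \sigma$ to be exactly $d-k-1$. The remaining arguments are essentially bookkeeping that repackages the definitions worked out in Sections \ref{sec:acyclic} and \ref{sec:ksystem}.
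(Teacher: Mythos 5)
Your proof is correct and follows essentially the same route as the paper's, verifying the three assertions in the same order and by the same means; the only substantive difference is that you fill in the dimension count $\dim\sigma=d-k-1$ (via distinctness of the deleted vertices, using that each ridge of a sphere lies in exactly two facets), a step the paper states without justification.
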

\begin{proof}
As previously discussed, the induced subgraphs on the elements of $\cV^\Delta_k$ are $k$-regular. Let $\langle t, t^1, \dots, t^k \rangle$ be a $k$-frame representing a face $\sigma$. Then $\dim \sigma = d-k-1$ and $t, t^1, \dots, t^k \in \cV^\Delta_k(\sigma)$. Suppose in addition that $t, t^1, \dots, t^k \in \cV^\Delta_k(\sigma')$ for some $(d-k-1)$-face $\sigma'$. Then $\sigma' \subseteq T \cap T_1 \cap \cdots \cap T_k = \sigma$, so $\sigma' = \sigma$. Therefore, $\cV^\Delta_k$ is a $k$-system of $G$. That $\cV^\Delta_k$ is star-like follows from the definition of good acyclic orientations. Finally, $\{\cV^\Delta_k\}_{2 \le k \le d-1}$ is compatible because the star of a $(d-k)$-face is contained in the star of any of its $(d-k-1)$-faces. 
\end{proof}

If $G$ is equipped with a good acyclic orientation, then for every principal $k$-frame $\langle t, t^1, \dots, t^k \rangle$, the vertices of $\cV^\Delta_k(\langle t, t^1, \dots, t^k \rangle)$ correspond to the facets of $\st_\Delta (T\cap T^1 \cap \cdots \cap T^k)$. Therefore, determining the facets of the stars of the $(d-k-1)$-faces of $\Delta$ for all $2 \le k \le d-1$ is equivalent to finding $\{\cV^\Delta_k\}_{2 \le k \le d-1}$.

\section{Main Result}\label{sec:main}
We assume that $d \ge 3$ throughout this section. Let $G$ be the facet-ridge graph of a shellable simplicial $(d-1)$-sphere $\Delta$. Our goal is to recover the combinatorial structure of $\Delta$.

We summarize the key observations from Sections \ref{sec:acyclic} and \ref{sec:ksystem} and outline the main steps of the proof:

\begin{itemize} [label={--}] 

\item Proposition \ref{prop:findgoodorientation} allows us to identify all good acyclic orientations of $G$.

\item Given a good acyclic orientation of $G$, Proposition \ref{prop:kframecorrespondence} provides a bijection between $\{(d-k-1)-$faces of $\Delta\}$ and $\{k$-frames of $G$ with a root of indegree $k$ in their respective $k$-frames with respect to $\cO\}$, for every $0 \le k \le d$. 

\item This bijection turns the problem of reconstructing $\Delta$ into determining $\{\cV^\Delta_k\}_{2 \le k \le d-1}$.

\item By Lemma \ref{lem:vk},  $\{\cV^\Delta_k\}_{2 \le k \le d-1}$ is a compatible family of star-like $k$-systems of $G$. 

\item This section is devoted to proving that every compatible family $\{\cS_k\}_{2 \le k \le d-1}$ of star-like $k$-systems of $G$ is identical to $\{\cV^\Delta_k\}_{2 \le k \le d-1}$. 

\item We can identify all star-like $k$-systems by inspecting all $k$-systems of $G$. Next, considering all such $k$-systems for all $2 \le k \le d-1$, we can identify the \emph{unique} compatible family of star-like $k$-systems, namely $\{\cV^\Delta_k\}_{2 \le k \le d-1}$, hence the reconstruction is complete.
\end{itemize}

We now begin to prove that every compatible family $\{\cS_k\}_{2 \le k \le d-1}$ of star-like $k$-systems of $G$ is identical to $\{\cV^\Delta_k\}_{2 \le k \le d-1}$. By Lemma \ref{lem:starlike}, when we have fixed a good acyclic orientation $\cO$ of $G$, this is equivalent to showing that 
\begin{equation}\label{eqn:ksystemequality}
    \cS_k(\sigma) = \cV^\Delta_k(\sigma)
\end{equation}
for every $2 \le k \le d-1$ and every $(d-k-1)$-face $\sigma \in \Delta$. The plan to do so is as follows. Let $T_1, \dots, T_n$ be a shelling of $\Delta$ induced by $\cO$ (see Proposition \ref{prop:reconstructshelling}). We first prove that (\ref{eqn:ksystemequality}) holds for every $\sigma$ that is a restriction face with respect to this shelling; see Lemma \ref{lem:restriction}. Next, assuming (\ref{eqn:ksystemequality}) holds for all $\sigma \nsubseteq T_1$, we prove that (\ref{eqn:ksystemequality}) also holds for all $\sigma \subseteq T_1$; see Lemma \ref{lem:firstfacet}. We then combine both lemmas and complete the proof of Theorem \ref{thm:main} using induction on $d$.

\begin{lemma}\label{lem:restriction}
Let $\cO$ be a good acyclic orientation of $G$ and let $T_1, \dots, T_n$ be a shelling induced by $\cO$ with restriction faces $\rho_i := \cR^\cO(T_i)$ of dimension $\dim \rho_i = d - k_i - 1$ for $1 \le i \le n$. Let $\{\cS_k\}_{2 \le k \le d-1}$ be a compatible family of star-like $k$-systems of $G$. Then $\cS_{k_i}(\rho_i) = \cV^\Delta_{k_i}(\rho_i)$ for every $1 \le i \le n$. 
\end{lemma}

We first prove Claim \ref{clm:initial} to establish equality between two unions of graphs. To do so, we use the construction of the induced shelling and properties of initial sets of directed graphs. Then by induction, we show that since these unions are equal, the individual graphs have to be pairwise equal for every $i$. The key step of this induction is proved in Claim \ref{clm:freevertex}. 

\begin{claim}\label{clm:initial}
For every $1 \le i \le n$, 
\begin{equation}\label{eqn:initialset}
   G[\cS_{k_i}(\rho_i)] \cup \cdots \cup G[\cS_{k_n}(\rho_n)] = G[\cV^\Delta_{k_i}(\rho_i)] \cup \cdots \cup G[\cV^\Delta_{k_n}(\rho_n)].
\end{equation}
\end{claim}

\begin{proof}[Proof of Claim \ref{clm:initial}]
Let $1 \le i \le n$. Let $G_{\cS, i}$ and $G_{\cV^\Delta, i}$ denote the graphs on either side of (\ref{eqn:initialset}) respectively. We first show that 
\[
G_{\cV^\Delta, i} = G[V(G) \setminus \{t_1, \dots, t_{i-1}\}].
\]
By the proof of Proposition \ref{prop:reconstructshelling}, if $t \in \cV^\Delta_{k_j}(\rho_j)$ for any $i \le j \le n$, then $T$ appears after $T_j$ in the shelling. Therefore, the two graphs have the same vertex set. Moreover, if $t_\ell \xrightarrow{\cO} t_j$ is a directed edge in $G$ with $\ell > j \ge i$, then $t_\ell \in \cV^\Delta_{k_j}(\rho_j)$, so the edge is also contained in $G[\cV^\Delta_{k_j}(\rho_j)]$. 

Let $\Phi_{k_i}(t_i)$ denote the principal $k_i$-frame representing $\rho_i$. It follows from the above that 
\[
G_{\cV^\Delta, i} = \Phi_{k_i}(t_i) \cup G_{\cV^\Delta, i+1}. 
\]
We now prove that $G_{\cS, i} = G_{\cV^\Delta, i}$ for every $1 \le i \le n$; we do so by backward induction on $i$. The base case is to show that $G[\cS_{k_n}(\rho_n)] = G[\cV^\Delta_{k_n}(\rho_n)]$. The restriction face of the last facet in any shelling of a sphere is the facet itself, so $k_n = 0$ and $G[\cS_{k_n}(\rho_n)] = G[\cV^\Delta_{k_n}(\rho_n)] = \{t_n\}$. Let $1 \le i \le n-1$ and assume inductively that $G_{\cS, i+1} = G_{\cV^\Delta, i+1}$. Then 
\[
G_{\cS, i} = G[\cS_{k_i}(\rho_i)] \cup G_{\cV^\Delta, i+1} \supseteq \Phi_{k_i}(t_i) \cup G_{\cV^\Delta, i+1} = G_{\cV^\Delta, i}. 
\]
On the other hand, observe that $V(G) \setminus \{t_1, \dots, t_{i-1}\}$ is initial in $G$. Since $\cS_{k_i}$ is star-like, the sink $t_i$ of $G[\cS_{k_i}(\rho_i)]$ is unique. Thus there is a directed path from every vertex in $\cS_{k_i}(\rho_i)$ to $t_i$. This implies $G[\cS_{k_i}(\rho_i)] \subseteq G[V(G) \setminus \{t_1, \dots, t_{i-1}\}]$. Thus
\[
G_{\cS, i} = G[\cS_{k_i}(\rho_i)] \cup G_{\cV^\Delta, i+1} \subseteq G_{\cV^\Delta, i}. 
\]
This completes the induction and (\ref{eqn:initialset}) follows. 
\end{proof}

Next, we will show a way to reduce $G_{\cV^\Delta, i}$ to its subgraph $G[\cV^\Delta_{k_i}(\rho_i)]$ using only the information about $\{G[\cV^\Delta_{k_j}(\rho_j)]\}_{i+1 \le j \le n}$. Specifically, we would like to remove a sequence of vertices $t_{j_1}, \dots, t_{j_m}$ from $G_{\cV^\Delta, i}$ such that for every $1 \le \ell \le m$, the vertex $t_{j_\ell}$ is \textbf{\emph{free}} in $G_{\ell-1}:= G[V(G_{\cV^\Delta, i}) \setminus \{t_{j_1}, \dots, t_{j_{\ell - 1}}\}]$ (with $G_0 := G_{\cV^\Delta, i})$. That is, there exists some $i + 1 \le j \le n$ and a principal $k$-frame $\Phi_k(t_j)$ (where $k \le k_j$) rooted at $t_j$ such that 
    \begin{enumerate}
        \item[(A1)] $G[\cV^\Delta_k(\Phi_k(t_j))] \subseteq G_{\ell-1}$, and 
        \item[(A2)] $t_{j_\ell}$ and all its neighbors in $G_{\ell-1}$ are contained in $\cV^\Delta_k(\Phi_k(t_j))$.
    \end{enumerate}
Observe that (A1) and (A2) together imply that $\deg_{G_{j_{\ell-1}}}t_{j_\ell} = k$. See Figure \ref{fig:freevertex} for an illustration of a free vertex. 
\begin{figure}[H]
\centering
\includegraphics[width=0.2\textwidth]{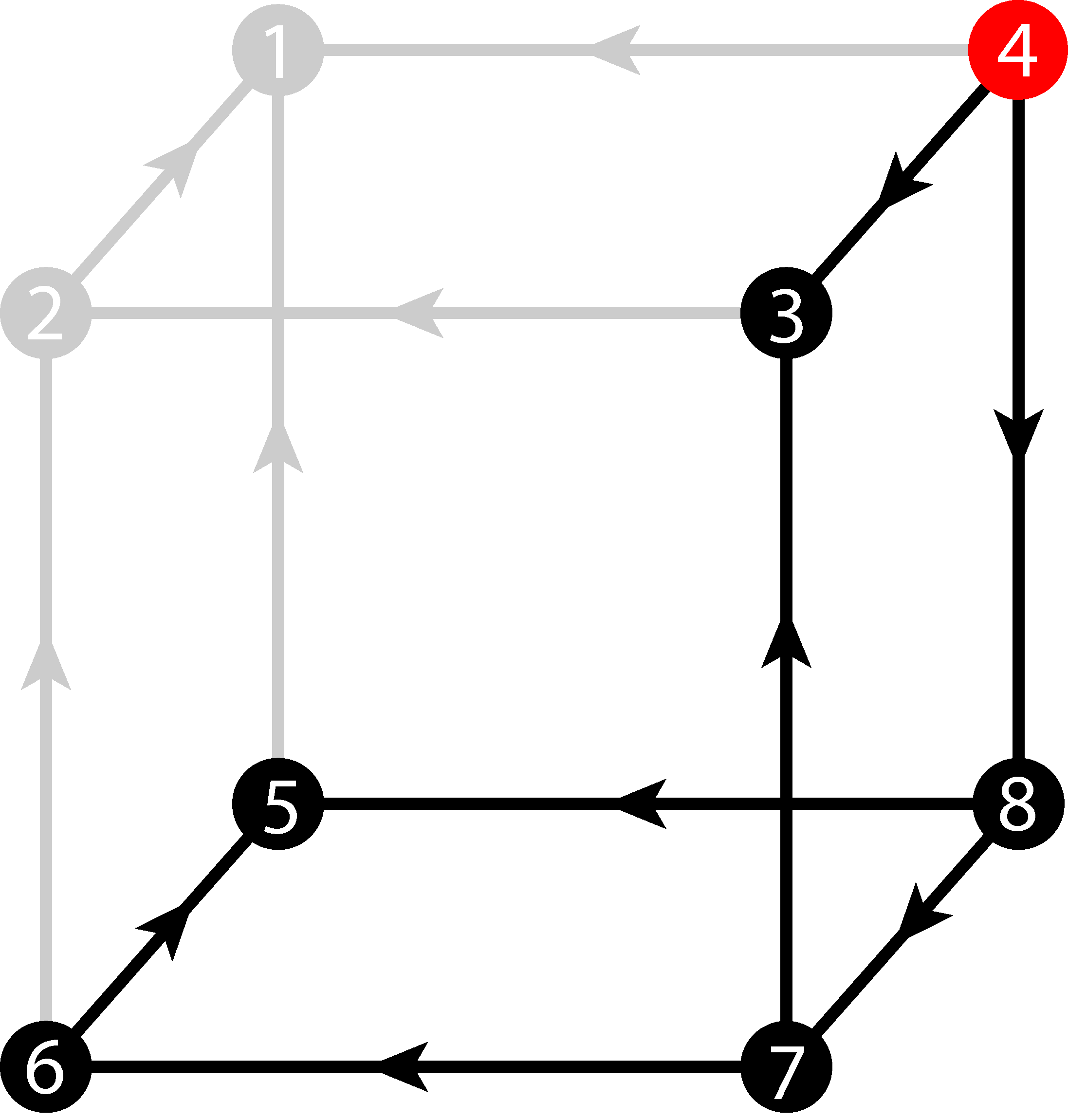}
\caption{Using the notation of Example \ref{ex:octahedrongraph}, The vertex $4$ is free in $H[V(H)\setminus \{1, 2\}]$ because $4, 3, 8 \in \cV_2^\Gamma (\langle 3, 4, 7\rangle)$. }
\label{fig:freevertex}
\end{figure}
\begin{claim}\label{clm:freevertex}
    Suppose that there is no free vertex in $G_m$. Then $G_m = G[\cV^\Delta_{k_i}(\rho_i)]$. 
\end{claim}
\begin{proof}[Proof of Claim \ref{clm:freevertex}]
    This is better seen by analyzing the subcomplexes of $\Delta$ corresponding to $G_0, \dots, G_m$. For $1 \le \ell \le m$, let $B_{\ell-1}$ denote the complex with the facet set $\{T_i, \dots, T_n\}\setminus \{T_{j_1}, \dots, T_{j_{\ell-1}}\}$ (with $B_0 := \overline{T}_i \cup \cdots \cup \overline{T}_n)$. Then $t_{j_\ell}$ being free in $G_{\ell-1}$ is equivalent to the following: there exists some $i + 1 \le j \le n$ and a face $\tilde{\rho}_j \in [\rho_j, T_j]$ such that 
    \begin{enumerate}
        \item[(C1)] $\st_\Delta \tilde{\rho}_j \subseteq B_{\ell-1}$, and
        \item[(C2)] $\st_\Delta \tilde{\rho}_j$ contains $T_{j_\ell}$ and all facets of $B_{\ell-1}$ that share a ridge with $T_{j_\ell}$ (for convenience, we refer to these facets as \textbf{\emph{neighbors}} of $T_{j_\ell}$). 
    \end{enumerate}
    By Corollary \ref{cor:shellingball}, $B_0$ is a ball whose boundary coincides with the boundary of $\overline{T}_1 \cup \cdots \cup \overline{T}_{i-1}$. Thus the interior faces of $B_0$ are exactly $\bigsqcup_{j = i}^n [\rho_j, T_j]$, the new faces added to $\Delta$ by $T_i, \dots, T_n$.  In particular, $B_0 = \bigcup_{j = i}^n \st_\Delta \rho_j$. 

    Suppose $B_{\ell - 1}$ is a ball and $t_{j_\ell}$ is free in $G_{\ell - 1}$. Then $T_{j_\ell}$ must be a free facet of $B_{\ell - 1}$, because (C1) and (C2) imply that $T_{j_\ell}$ intersects $B_{\ell - 1}$ at its set of ridges $\{T_{i_j} \setminus \{v\}: v \in \tilde{\rho}_j\}$. By Proposition \ref{prop:freefacet}, $B_\ell$ is also a ball.  We can then consider the chain of $(d-1)$-balls $B_\ell \subset B_{\ell - 1} \subset \cdots \subset B_0$. Interior faces of the smaller balls must also be in the interior of the larger balls. Let $\tilde{\rho}_j \in [\rho_j, T_j]$ be an interior face of $B_\ell$. Suppose $\st_\Delta \tilde{\rho}_j \subseteq B_{\ell - 1}$. Then $T_{i_j} \notin \st_\Delta \tilde{\rho}_j$ since all remaining faces of $\overline{T}_{i_j}$ in $B_\ell$ are now on the boundary of $B_\ell$. Therefore, $\st_\Delta \tilde{\rho}_j \subseteq B_\ell$. We can thus express $B_\ell$ as a union of the stars (in $\Delta$) of its interior faces.

    Now suppose $\st_\Delta \rho_i \subseteq B_{\ell-1}$. We will show that either we can find a $T_{j_\ell} \in B_{\ell - 1}$, or $B_{\ell - 1} = \st_\Delta \rho_i$. Let $J = \{i+1, \dots, n\} \setminus \{j_1, \dots, j_{\ell - 1}\}$. By the above, we can write $B_{\ell - 1} = \st_\Delta \rho_i \cup \bigcup_{j \in J} \st_\Delta \tilde{\rho}_j$, where $\tilde{\rho}_j$ is the smallest face in $[\rho_j, T_j]$ that is an interior face of $B_{\ell-1}$ for every $j \in J$. Let $j \in \{i\} \cup J$ be the largest index such that $\st_\Delta \tilde{\rho}_{j'} \subseteq \st_\Delta \tilde{\rho}_j$ ($= \st_\Delta \rho_i$ if $j = i$) for all $j < j' \in J$. 

    \emph{Case 1}. Suppose $j = i$, then we are done. 
    
    \emph{Case 2}. Suppose $j \ne i$ and there exists a $T \in \st_\Delta \tilde{\rho}_j$ such that $T$ and all its neighbors in $B_{\ell - 1}$ are in $\st_\Delta \tilde{\rho}_j$, then take $T$ to be $T_{j_\ell}$. 
    
    \emph{Case 3}. Suppose $j \ne i$ and for every $T \in \st_\Delta \tilde{\rho}_j$, not all its neighbors are contained in $\st_\Delta \tilde{\rho}_j$. Let $T \in \st_\Delta \tilde{\rho}_j$ and $T'$ be a neighbor of $T$ such that $T' \notin \st_\Delta \tilde{\rho}_j$. Then $T \cap T'$ is an interior ridge of $B_{\ell - 1}$, which means $T \cap T' \supseteq \tilde{\rho}_{j'}$ for some $j' \in \{i\} \cup J$. Therefore, $T, T' \in \st_\Delta \tilde{\rho}_{j'}$. Since $T' \in \st_\Delta \tilde{\rho}_{j'} \setminus \st_\Delta \tilde{\rho}_j$, it must be that $j' < j$ by the maximality of $j$. Therefore, we can rewrite $B_{\ell - 1}$ as $\st_\Delta \rho_i \cup \bigcup_{j' \in J'} \st_\Delta \tilde{\rho}_{j'}$, where $J' = J \setminus \{j, \dots, n\}$. In this case, repeat the argument of finding the largest index. We will eventually find a $T_{j_\ell}$ (Case 2) or write $B_{\ell - 1} = \st_\Delta \rho_i$ (Case 1). 
    \end{proof}
\begin{proof} [Proof of Lemma \ref{lem:restriction}]
To show that $\cS_{k_i}(\rho_i) = \cV^\Delta_{k_i}(\rho_i)$ for every $1 \le i \le n$, we again use backward induction on $i$. The base case $\cS_{k_n}(\rho_n) = \cV^\Delta_{k_n}(\rho_n)$ holds by Claim \ref{clm:initial}. Let $1 \le i \le n-1$ and assume inductively that $\cS_{k_j}(\rho_j) = \cV^\Delta_{k_j}(\rho_j)$ for every $i + 1 \le j \le n$. 

By Claim \ref{clm:freevertex}, we can obtain $G[\cV^\Delta_{k_i}(\rho_i)]$ from $G_{\cV^\Delta, i}$ by deleting free vertices $t_{j_1}, \dots, t_{j_m}$. By Claim \ref{clm:initial}, $G[\cS_{k_i}(\rho_i)] \subseteq G_{\cS, i} = G_{\cV^\Delta, i} = G_0$. We will show in the next paragraph that $t_{j_\ell} \notin \cS_{k_i}(\rho_i)$ for every $1 \le \ell \le m$. Equivalently, $G[\cS_{k_i}(\rho_i)] \subseteq G_{\ell - 1}$ for every $1 \le \ell \le m$. From this, we can conclude that 
\[
G[\cS_{k_i}(\rho_i)] \subseteq G_m = G[\cV^\Delta_{k_i}(\rho_i)].
\]
However, $G[\cS_{k_i}(\rho_i)]$ is a $k_i$-regular graph and $G[\cV^\Delta_{k_i}(\rho_i)]$ is a connected $k_i$-regular graph, so they must be equal. 

Suppose inductively that $G[\cS_{k_i}(\rho_i)] \subseteq G_{\ell - 1}$. Let $k := \deg_{G_{j_{\ell - 1}}} t_{j_\ell}$. If $k < k_i$, then $t_{j_\ell} \notin \cS_{k_i}(\rho_i)$ because $G[\cS_{k_i}(\rho_i)]$ is $k_i$-regular. If $k \ge k_i$, by (A2) the $k$-frame rooted at $t_{i_j}$ in $G_{\ell-1}$ is contained in $G[\cV^\Delta_k(\Phi_k(t_j))] \subseteq G[\cV^\Delta_{k_j}(\rho_j)] = G[\cS_{k_j}(\rho_j)]$ for some $i+1 \le j \le n$. By compatibility of $\{\cS_k\}$, if $t_{i_j} \in \cS_{k_i}(\rho_i)$, then $G[\cS_{k_i}(\rho_i)] \subseteq G[\cS_{k_j}(\rho_j)]$. But that is impossible since $t_i \notin \cV^\Delta_{k_j}(\rho_j) = \cS_{k_j}(\rho_j)$. Therefore, $t_{i_j} \notin \cS_{k_i}(\rho_i)$ and $G[\cS_{k_i}(\rho_i)] \subseteq G_\ell$. 
\end{proof}

\begin{lemma}\label{lem:firstfacet}
Let $\cO$ be a good acyclic orientation of $G$ and let $t_1$ be the unique sink of $G$. Let $\{\cS_k\}_{2 \le k \le d-1}$ be a compatible family of star-like $k$-systems such that 
\[
\cS_k(\tau) = \cV^\Delta_k(\tau)\text{ for every }2 \le k \le d-1\text{ and every }(d-k-1)\text{-face }\tau \nsubseteq T_1.
\]
Then $\cS_k(\sigma) = \cV^\Delta_k(\sigma)$ for every $2 \le k \le d-1$ and every $(d-k-1)$-face $\sigma \subseteq T_1$.
\end{lemma}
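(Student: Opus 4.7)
My plan is to first reduce Lemma~\ref{lem:firstfacet} to the vertex case ($k = d-1$, $\sigma = \{v\}$ with $v \in T_1$), and then address the vertex case directly.

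The reduction proceeds as follows. For $\sigma \subseteq T_1$ of dimension $d-k-1$ and any $v \in \sigma$, the principal $k$-frame $\Phi_k(\sigma)$ and the principal $(d-1)$-frame $\Phi_{d-1}(\{v\})$ are both rooted at $t_1$; their leaves correspond respectively to the ridges of $T_1$ containing $\sigma$ and to the ridges of $T_1$ containing $v$, and every ridge of $T_1$ containing $\sigma$ also contains $v$. Adjoining the extra leaves of $\Phi_{d-1}(\{v\})$ one at a time produces a chain $\Phi_k(\sigma) \subset \Phi_{k+1} \subset \cdots \subset \Phi_{d-1}(\{v\})$ in which each intermediate frame is the principal frame of some face contained in $T_1$. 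Iterated compatibility then gives $\cS_k(\sigma) \subseteq \cS_{d-1}(\{v\})$. Intersecting over $v \in \sigma$ and invoking the vertex case,
\[
\cS_k(\sigma) \;\subseteq\; \bigcap_{v \in \sigma} \cV^\Delta_{d-1}(\{v\}) \;=\; \cV^\Delta_k(\sigma).
\]
Since $G[\cV^\Delta_k(\sigma)]$ is connected and $k$-regular (being the facet-ridge graph of the simplicial $(k-1)$-sphere $\lk_\Delta \sigma$), the $k$-regular subgraph $G[\cS_k(\sigma)]$ contained in it must coincide with $G[\cV^\Delta_k(\sigma)]$.

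For the vertex case, fix $v \in T_1$ and let $u \in \cS_{d-1}(\{v\})$ with $u \ne t_1$; write $\Phi(u) = \{u, u^1, \dots, u^{d-1}\}$ for the $(d-1)$-frame of $G[\cS_{d-1}(\{v\})]$ rooted at $u$, which represents some $0$-face $\{v'\}$ of $\Delta$. A short argument of the same flavor as the base step in the proof of Lemma~\ref{lem:restriction} (using the injectivity of $\sigma \mapsto \cS_{d-1}(\sigma)$ from Lemma~\ref{lem:starlike} together with the hypothesis of Lemma~\ref{lem:firstfacet}) rules out $\{v'\} \nsubseteq T_1$, forcing $v' \in T_1$. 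Because $u \ne t_1$, one has $U \setminus T_1 \ne \emptyset$, so at least one $(d-2)$-subframe $\Psi$ of $\Phi(u)$ represents a face $\tau = \{v', w\}$ with $w \notin T_1$; by compatibility and the hypothesis of Lemma~\ref{lem:firstfacet} applied at level $d-2$ to $\tau \nsubseteq T_1$,
\[
\cV^\Delta_{d-2}(\tau) \;=\; \cS_{d-2}(\tau) \;\subseteq\; \cS_{d-1}(\{v\}).
\]
One then upgrades $v' \in T_1$ to $v' = v$: assuming $v' \ne v$, a facet $T^\# \in \cV^\Delta_{d-2}(\tau)$ can be chosen so that its principal $(d-1)$-frame would place $T^\#$ simultaneously in $\cS_{d-1}(\{v\})$ and in $\cS_{d-1}(\{v'\})$, forcing the two distinct sets to coincide and contradicting Lemma~\ref{lem:starlike}. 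Hence $v' = v$, so $u \in \cV^\Delta_{d-1}(\{v\})$; connectivity and $(d-1)$-regularity of $G[\cV^\Delta_{d-1}(\{v\})]$ then yield $\cS_{d-1}(\{v\}) = \cV^\Delta_{d-1}(\{v\})$.

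The reduction step is a straightforward iteration of compatibility. The substantive obstacle is the vertex case: compatibility with $\cS_d$ is trivial (the $d$-system is unique), so no further downward induction is available, and pinning down $v' = v$ requires the delicate combinatorial analysis above, which combines compatibility with $\cS_{d-2}$, the hypothesis of Lemma~\ref{lem:firstfacet} outside $T_1$, and the injectivity of $\sigma \mapsto \cS_k(\sigma)$ provided by Lemma~\ref{lem:starlike}.
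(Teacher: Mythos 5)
Your reduction to the vertex case is correct and cleanly argued: since $\sigma \subseteq T_1$ and $t_1$ is the unique sink of $G$, the principal frames of $\sigma$ and of each $\{v\}$ with $v \in \sigma$ are all rooted at $t_1$, the chain of intermediate frames exists, and iterated compatibility gives $\cS_k(\sigma) \subseteq \bigcap_{v\in\sigma} \cS_{d-1}(\{v\})$; granting the vertex case, $k$-regularity of $G[\cV^\Delta_k(\sigma)]$ finishes it. Your first step in the vertex case (that the $0$-face $\{v'\}$ represented by the $(d-1)$-frame $\Phi(u)$ satisfies $v' \in T_1$) is also sound: if $v' \notin T_1$ then $\Phi(u)$'s vertex set would lie in both $\cS_{d-1}(\{v\})$ and $\cV^\Delta_{d-1}(\{v'\}) = \cS_{d-1}(\{v'\})$, forcing these to coincide and contradicting the injectivity of Lemma~\ref{lem:starlike}.

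The gap is in the final step, ``upgrade $v' \in T_1$ to $v' = v$.'' You want a $(d-1)$-frame whose vertex set lies in both $\cS_{d-1}(\{v\})$ and $\cS_{d-1}(\{v'\})$, so that uniqueness forces $\cS_{d-1}(\{v\}) = \cS_{d-1}(\{v'\})$, contradicting Lemma~\ref{lem:starlike}. But when $v' \in T_1$ and $v' \ne v$, the set $\cS_{d-1}(\{v'\})$ is exactly as unknown as $\cS_{d-1}(\{v\})$: the hypothesis of the lemma tells you nothing about it, and the only $(d-1)$-frame you know a priori belongs to $\cS_{d-1}(\{v'\})$ is the principal one rooted at $t_1$, whose leaves differ from those of $\cS_{d-1}(\{v\})$ precisely because $v \ne v'$. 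The inclusion $\cV^\Delta_{d-2}(\{v',w\}) \subseteq \cS_{d-1}(\{v\})$ (which you do get from compatibility and the hypothesis, since $w\notin T_1$) only reproduces facets already known to lie in $\cS_{d-1}(\{v\})$; it supplies no $(d-1)$-frame certified to lie in $\cS_{d-1}(\{v'\})$. Following your propagation one can show that the function $u \mapsto v'$ is locally constant away from $t_1$, but at $t_1$ itself every $(d-2)$-subframe of the principal frame of $\{v\}$ represents a face \emph{inside} $T_1$, so the hypothesis never applies and there is no way to transmit the value $v$ from $t_1$ to its neighbors. Contrast this with the paper's proof of Lemma~\ref{lem:firstfacet}: it proceeds by induction on $k$ upward from $k=2$ (with a separate cycle-based base case, Claim~\ref{clm:claim2}), and its inductive step at level $k$ uses $\cS_{k-1} = \cV^\Delta_{k-1}$ \emph{in full} --- including on faces inside $T_1$, which is exactly what the induction hypothesis supplies --- to determine the neighbor of each vertex of $\cS_k(\sigma_i)$. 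Your proposal, by reducing directly to $k = d-1$, forgoes that information and has no substitute for it.
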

Recall our assumption that $d \ge 3$. Let $2 \le k \le d-1$ and let $\sigma_1, \dots, \sigma_{{d \choose k}}$ be the $(d-k-1)$-faces contained in $T_1$. The plan is to prove the lemma by induction on $k$. We divide the proof into three parts, starting with Claims \ref{clm:claim1} and \ref{clm:claim2}. 
\begin{claim}\label{clm:claim1}
    The following equality between two graphs holds:
    \begin{equation}\label{eqn:firstksets}
    G[\cS_k(\sigma_1)] \cup \cdots \cup G[\cS_k(\sigma_{{d \choose k}})] = G[\cV^\Delta_k(\sigma_1)] \cup \cdots \cup G[\cV^\Delta_k(\sigma_{{d \choose k}})].
\end{equation}
\end{claim}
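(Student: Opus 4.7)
The plan is to prove Claim \ref{clm:claim1} directly from the hypothesis of Lemma \ref{lem:firstfacet}, using only the uniqueness property built into the definition of a $k$-system together with the bijection from Lemma \ref{lem:starlike}; no induction on $k$ is needed at this stage. The central tool is the following auxiliary statement, which I will refer to as (*): if $\Phi$ is a $k$-frame of $G$ whose vertex set lies in $\cS_k(\rho)$ and whose represented $(d-k-1)$-face is $\rho^*$, then $\rho = \rho^*$, or both $\rho \subseteq T_1$ and $\rho^* \subseteq T_1$.

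To prove (*), I argue by contrapositive. If $\rho^* \not\subseteq T_1$, then by the hypothesis of Lemma \ref{lem:firstfacet}, $\cS_k(\rho^*) = \cV^\Delta_k(\rho^*)$; since $\Phi$ represents $\rho^*$, every facet in $\Phi$ contains $\rho^*$, so the vertex set of $\Phi$ also lies in $\cV^\Delta_k(\rho^*) = \cS_k(\rho^*)$. The $k$-system property forces $\cS_k(\rho) = \cS_k(\rho^*)$, and Lemma \ref{lem:starlike} then yields $\rho = \rho^*$. Symmetrically, if $\rho \not\subseteq T_1$, then $\cS_k(\rho) = \cV^\Delta_k(\rho)$, so every facet in $\Phi$ contains $\rho$; hence $\rho$ lies in the intersection of these facets, which equals $\rho^*$, and by comparing dimensions, $\rho = \rho^*$.

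With (*) in hand, each of the four inclusions in Claim \ref{clm:claim1} reduces to applying (*) to a natural $k$-frame. For the edge inclusion $G[\cS_k(\sigma_i)] \subseteq \bigcup_j G[\cV^\Delta_k(\sigma_j)]$, an edge $tt' \in G[\cS_k(\sigma_i)]$ extends to the unique $k$-frame at $t$ in the $k$-regular graph $G[\cS_k(\sigma_i)]$ (of which $t'$ is one of the $k$ neighbors); applying (*) with $\rho = \sigma_i \subseteq T_1$ forces $\rho^* \subseteq T_1$, so $\rho^* = \sigma_j$ and $tt' \in G[\cV^\Delta_k(\sigma_j)]$. Conversely, an edge $tt' \in G[\cV^\Delta_k(\sigma_j)]$ extends to the unique $k$-frame at $t$ in $G[\cV^\Delta_k(\sigma_j)]$, which represents $\sigma_j \subseteq T_1$; (*) applied with $\rho^* = \sigma_j$ forces the containing element $\cS_k(\rho)$ to satisfy $\rho \subseteq T_1$, so $tt' \in G[\cS_k(\sigma_i)]$ for some $i$. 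The vertex inclusions go through analogously: a vertex $t \in \cS_k(\sigma_i)$ is placed into some $\cV^\Delta_k(\sigma_j)$ via the $k$-frame at $t$ in $G[\cS_k(\sigma_i)]$, and a vertex $t$ with $\sigma_j \subseteq T$ is placed into some $\cS_k(\sigma_i)$ via the $k$-frame at $t$ determined by the $k$ ridges of $T$ containing $\sigma_j$.

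The main obstacle is conceptual rather than computational: in a general star-like $k$-system, the face represented by a $k$-frame need not agree with the face labeling the element of $\cS_k$ that contains the frame. Statement (*) is precisely the mechanism that forces these two labels to coincide whenever either is not a subset of $T_1$; once this is internalized, the four inclusions follow by selecting, for each edge or vertex in question, the natural $k$-frame to which (*) can be applied.
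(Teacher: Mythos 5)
Your proof is correct and follows essentially the same route as the paper: the paper's own proof takes a $k$-frame $\Phi_k$ in $G[\cS_k(\sigma_i)]$, observes (by the uniqueness built into the $k$-system definition together with the hypothesis $\cS_k(\tau)=\cV^\Delta_k(\tau)$ for $\tau\nsubseteq T_1$) that the vertex set of $\Phi_k$ cannot lie in any $\cV^\Delta_k(\tau)$ with $\tau\nsubseteq T_1$, hence must lie in some $\cV^\Delta_k(\sigma_j)$, and then invokes symmetry for the reverse inclusion. Your statement (*) is a cleanly factored-out formulation of exactly this crossing mechanism, and the four inclusions you spell out unpack the same logic in slightly more explicit form.
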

\begin{proof}[Proof of Claim \ref{clm:claim1}]
This follows from Lemma \ref{lem:starlike}. Let $\Phi_k$ be a $k$-frame contained in $G[\cS_k(\sigma_i)]$ for some $1 \le i \le {d \choose k}$. Then the vertex set of $\Phi_k$ is not contained in any $\cS_k(\tau)$ such that $\tau \nsubseteq T_1$, and therefore not contained in $\cV^\Delta_k(\tau)$ by hypothesis. Therefore, $\Phi_k \subseteq G[\cV^\Delta_k(\sigma_j)]$ for some $1 \le j \le {d \choose k}$. The argument works in reverse by swapping ``$\cS$" with ``$\cV^\Delta$." This proves the equality in (\ref{eqn:firstksets}).  
\end{proof}
Let $U$ denote the union $\cS_k(\sigma_1) \cup \cdots \cup \cS_k(\sigma_{{d \choose k}}) = \cV^\Delta_k(\sigma_1) \cup \cdots \cup \cV^\Delta_k(\sigma_{{d \choose k}})$ and let $G_U$ denote the graph in (\ref{eqn:firstksets}). The next claim will serve as the base case for induction. 

\begin{claim}\label{clm:claim2}
    Suppose $k = 2$. Then $\cS_2(\sigma_i) = \cV^\Delta_2(\sigma_i)$ for $1 \le i \le {d \choose 2}$. 
\end{claim}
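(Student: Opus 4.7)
The plan is to walk around the cycle $G[\cV^\Delta_2(\sigma_i)]$ and prove, vertex by vertex, that each of its vertices lies in $\cS_2(\sigma_i)$. Once $\cV^\Delta_2(\sigma_i) \subseteq \cS_2(\sigma_i)$ is established, equality follows from a cycle-in-cycle argument: both induced subgraphs are single cycles (for $\cS_2(\sigma_i)$ this uses that star-likeness forces $G[\cS_2(\sigma_i)]$ to be connected, so a single cycle), and a cycle cannot properly contain another cycle as a spanning closed walk.

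Write $\sigma_i = T_1 \setminus \{a,b\}$ and the cycle as $t_1 - t_1^a - v_1 - \cdots - v_m - t_1^b - t_1$. I would first record the structural fact that each internal $v_j$ satisfies $V_j \cap T_1 = \sigma_i$: if $v_j = t_1^u$ were a neighbor of $t_1$, then $T_1^u \supseteq \sigma_i$ would force $u \in \{a,b\}$, placing $v_j$ at an endpoint. Hence each internal $v_j$ is contained in exactly one set of $\cV^\Delta_2$ indexed by a face $\subseteq T_1$. The next step is a complementarity observation: because $\cS_2(\tau) = \cV^\Delta_2(\tau)$ for every $\tau \nsubseteq T_1$, a 2-frame $\Phi$ is assigned to a set indexed by some $\tau \nsubseteq T_1$ in $\cS_2$ iff it is in $\cV^\Delta_2$, and by complementation the same equivalence holds for assignment to a set indexed by some $\sigma \subseteq T_1$. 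Applied to each internal $v_j$, this shows that the unique 2-frame at $v_j$ landing in a ``$\sigma \subseteq T_1$'' set is the same vertex set $\{v_j, v_{j-1}, v_{j+1}\}$ in both 2-systems; in particular it lies in some $\cS_2(\tau_j)$ with $\tau_j \subseteq T_1$, and $v_j$ is in exactly one such $\cS_2(\tau_j)$.

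The walk now propagates: for $j \ge 2$, $v_{j-1} \in \cS_2(\tau_j) \cap \cS_2(\tau_{j-1})$ together with the uniqueness just obtained yields $\tau_j = \tau_{j-1}$, so all the $\tau_j$ collapse to a common value $\sigma^*$. At the two ends, $t_1^a \in \cS_2(\sigma^*)$ (read off from the 2-frame at $v_1$) forces $\sigma^* = T_1 \setminus \{a, b'\}$ for some $b'$, since any ``$\sigma \subseteq T_1$'' set of $\cS_2$ containing $t_1^a$ has its principal 2-frame at $t_1$ including $t_1^a$; symmetrically $t_1^b \in \cS_2(\sigma^*)$ forces $\sigma^* = T_1 \setminus \{a'', b\}$. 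Since $a \ne b$, matching these gives $\sigma^* = T_1 \setminus \{a,b\} = \sigma_i$, completing the walk. The degenerate case $m = 0$ is trivial because $\cV^\Delta_2(\sigma_i) = \{t_1, t_1^a, t_1^b\}$ is already the principal 2-frame contained in $\cS_2(\sigma_i)$.

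I expect the main obstacle to be executing the complementarity step cleanly and ensuring uniqueness of $\tau_j$ at each internal $v_j$; the structural observation that no internal $v_j$ can be a neighbor of $t_1$ is what guarantees this uniqueness and lets the single label $\sigma^*$ propagate unambiguously from $t_1^a$ all the way to $t_1^b$.
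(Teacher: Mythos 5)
Your proof is correct and takes essentially the same approach as the paper's: both exploit the structural fact that the cycles $G[\cV^\Delta_2(\sigma_i)]$ pairwise overlap only at $t_1$ and its neighbors (you state this as $V_j \cap T_1 = \sigma_i$ for internal vertices), and then trace paths of degree-two vertices of $G_U$ between the branch points $t_1^a, t_1^b$ to pin down which cycle each vertex belongs to, matching your walk to the paths $p_1, p_2$ in the paper. Your version is more explicit — propagating a single label $\sigma^*$ along the cycle, using star-likeness at $t_1$ to force $\sigma^* = \sigma_i$, and covering the degenerate $m = 0$ case — which fills in the details behind the paper's ``it suffices to show'' step.
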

\begin{proof}[Proof of Claim \ref{clm:claim2}]
In this case, $\{G[\cS_2(\sigma_i)]\}_{1 \le i \le {d \choose 2}}$ and $\{G[\cV^\Delta_2(\sigma_i)]\}_{1 \le i \le {d \choose 2}}$ are sets of induced cycles of $G$, and $G_U$ is the union of each set of these cycles. 

We show that the cycles in $\{G[\cV^\Delta_2(\sigma_i)]\}_{1 \le i \le {d \choose 2}}$ only intersect at $t_1$ and its neighbors. Suppose $t \in G[\cV^\Delta_2(\sigma_i)] \cap G[\cV^\Delta_2(\sigma_j)]$ for some $1 \le i \ne j \le {d \choose 2}$ (note that ${d \choose 2} \ge 3$ because $d \ge 3$). Then $T$ is in the star of two $(d-3)$-faces contained in $T_1$. This implies $\dim(T \cap T_1) \ge d - 2$, so $t$ is either $t_1$ or a neighbor of $t_1$. Therefore, a vertex $t \ne t_1$ has degree $2$ in $G_U$ if and only if $t$ is not a neighbor of $t_1$. 

To prove that $\cS_2(\sigma_i) = \cV^\Delta_2(\sigma_i)$ for $1 \le i \le {d \choose 2}$, it suffices to show that if $t$ has degree $2$ in $G_U$, then $t$ is in the cycles $G[\cS_2(\sigma_i)]$ and $G[\cV^\Delta_2(\sigma_i)]$ for the same $i$. Let $t$ be a vertex of degree $2$ in $G_U$ and let $t^1$ and $t^2$ denote the two neighbors of $t$ in $G_U$. Then there is a unique (undirected) path $p_1 = t \, t^1 \, v_1 \, \cdots \, v_m \, t_1^j$ in $G_U$ and a unique path $p_2 = t \, t^2 \, w_1 \, \cdots \, w_{m'} \, t_1^\ell$ in $G_U$ such that all inner vertices of $p_1$ and $p_2$ have degree $2$ in $G_U$ while $t_1^j$ and $t_1^\ell$ have degrees larger than 2. See Figure \ref{fig:proofex} for an illustration, where $G_U$ is in black. 
Then the respective cycles in $\{G[\cS_2(\sigma_i)]\}_{1 \le i \le {d \choose 2}}$ and $\{G[\cV^\Delta_2(\sigma_i)]\}_{1 \le i \le {d \choose 2}}$ containing $t$ must also contain $p_1$ and $p_2$. In other words, $t \in G[\cS_2(\langle t_1, t_1^j, t_1^\ell \rangle)] \cap G[\cV^\Delta_2(\langle t_1, t_1^j, t_1^\ell \rangle)]$. 
\end{proof}
\begin{figure}[H]
\centering
\includegraphics[width=0.5\textwidth]{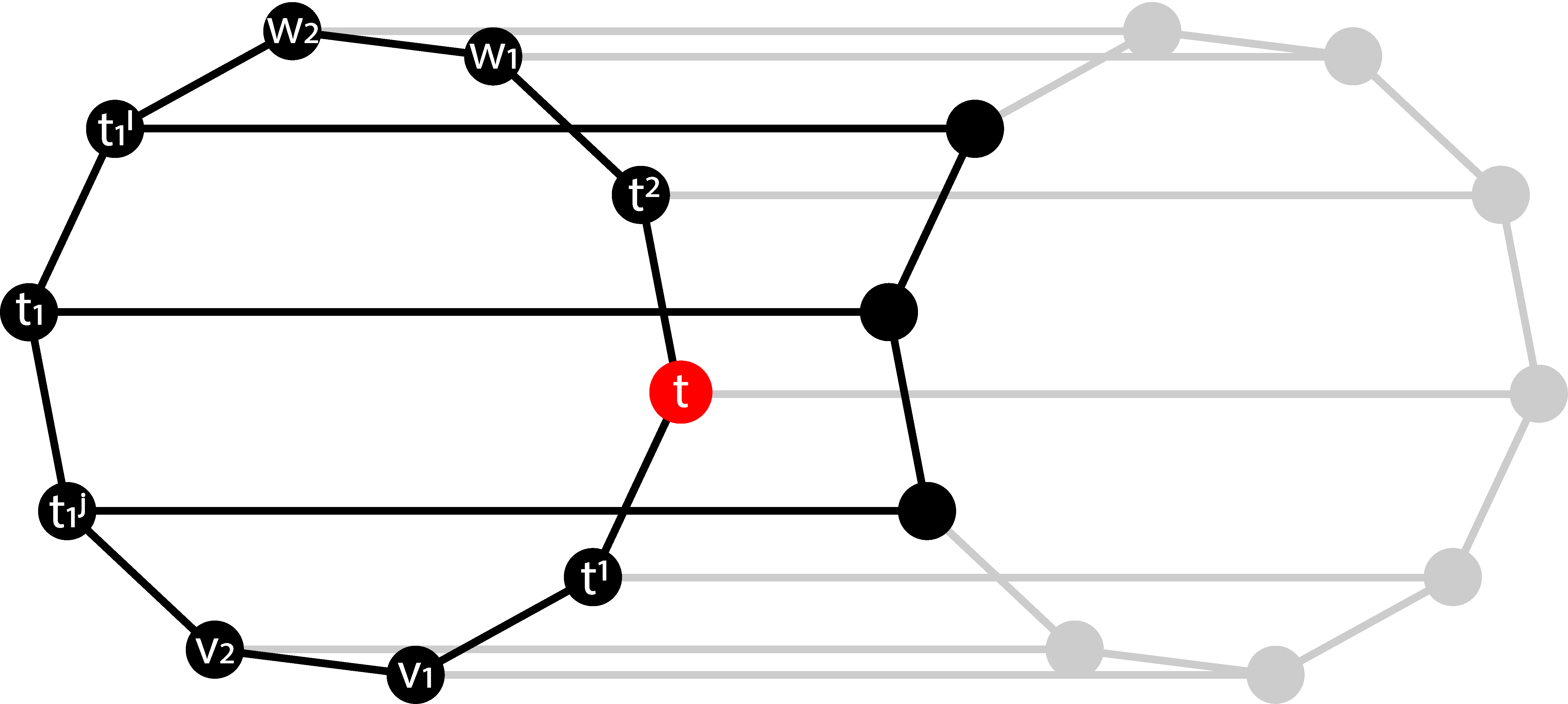}
\caption{A degree $2$ vertex $t \in G_U$ and the paths $p_1$ and $p_2$.}
\label{fig:proofex}
\end{figure}
\begin{proof}[Proof of Lemma \ref{lem:firstfacet}]
    Assume inductively that $\cS_{k-1}(\sigma) = \cV^\Delta_{k-1}(\sigma)$ for every $(d-k)$-face $\sigma \subseteq T_1$. This, combined with the hypothesis in the statement of the lemma and Proposition \ref{prop:kframecorrespondence}, implies that $\cS_{k-1} = \cV^\Delta_{k-1}$. 

    Let $1 \le i \le {d \choose k}$ and let $\langle t, t^1, \dots, t^k \rangle$ be a $k$-frame of $G[\cS_k(\sigma_i)]$. Consider a neighbor $t^j$ of $t$. Let $\Phi_k(t^j)$ denote the $k$-frame rooted at $t^j$ in $G[\cS_k(\sigma_i)]$. Furthermore, for $1 \le \ell \le k$ such that $\ell \ne j$, let $\Phi^\ell_{k-1}(t^j)$ denote the $(k-1)$-frame rooted at $t^j$ in $G[\cS_{k-1}(\langle t, t^1, \dots, t^{\ell-1}, t^{\ell+1}, \dots, t^k \rangle)]$. The $(k-1)$-frames $\Phi^\ell_{k-1}(t^j)$ must all be distinct, so their union $\bigcup_{1 \le \ell \le k, \, \ell \ne j} \Phi^\ell_{k-1}(t^j)$ is a $k$-frame. Since $\{\cS_k\}$ is compatible, this union is contained in $G[\cS_k(\sigma_i)]$. Therefore,
    \[
    \Phi_k(t^j) = \bigcup_{1 \le \ell \le k,\ \ell \ne j} \Phi^\ell_{k-1}(t^j). 
    \]
    In other words, when $\langle t, t^1, \dots, t^k \rangle$ is known to be contained in $G[\cS_k(\sigma_i)]$, for every $1 \le j \le k$, the neighbors of $t^j$ in $G[\cS_k(\sigma_i)]$ are completely determined by $\cS_{k-1}$. Therefore, starting from the principal $k$-frame representing $\sigma_i$, we can determine the neighbors of all its vertices in $G[\cS_k(\sigma_i)]$. Repeating this process allows us to determine the entire $G[\cS_k(\sigma_i)]$ using $\cS_{k-1}$. 

    By applying the same argument to $\cV^\Delta_k$, we can conclude that $G[\cV^\Delta_k(\sigma_i)]$ is determined by $\cV^\Delta_{k-1}$. Since $\cS_{k-1} = \cV^\Delta_{k-1}$, it follows that $G[\cS_k(\sigma_i)] = G[\cV^\Delta_k(\sigma_i)]$, completing the induction. 
    \end{proof}

The following example provides a preview of how we combine Lemmas \ref{lem:restriction} and \ref{lem:firstfacet} to prove the uniqueness of the compatible families of star-like $k$-systems. It is also the base case for our proof by induction of Theorem \ref{thm:main}.
\begin{example}\label{ex:2dsphere}(Simplicial $2$-spheres) Suppose $d = 3$, so that $\Delta$ is a shellable simplicial $2$-sphere. We will reconstruct $\Delta$ from $G$ with the tools we developed throughout the paper. First, apply Proposition \ref{prop:findgoodorientation} to find a good acyclic orientation $\cO$ of $G$. Then Proposition \ref{prop:reconstructshelling} recovers a shelling of $\Delta$ induced by $\cO$ and the corresponding partitioning. We would like to show that every compatible family $\{\cS_k\}_{2 \le k \le d-1}$ of $k$-systems is identical to  $\{\cV^\Delta_k\}_{2 \le k \le d-1}$. Since $d = 3$, this is reduced to showing that $\cS_2(\sigma) = \cV^\Delta_2(\sigma)$ for all $0$-faces $\sigma \in \Delta$. 

Let $\sigma$ be a $0$-face of $\Delta$. Let $T_1$ denote the first facet in the shelling. Since $\Delta$ is $2$-dimensional, the intervals other than $[\cR^\cO(T_1), T_1]$ are of height at most $2$. Therefore, either $\sigma$ is contained in $T_1$ or $\sigma$ is a restriction face of one of the later facets (in the case of Example \ref{ex:octahedronpartition}, such faces are circled in Figure \ref{fig:octahedronpartition}). Hence $\cS_2(\sigma) = \cV^\Delta_2(\sigma)$ by either Lemma \ref{lem:firstfacet} or Lemma \ref{lem:restriction}.
\end{example}

The proof of the main theorem requires only one extra step than that for the case of $d = 3$ in Example \ref{ex:2dsphere}. Recall that Theorem \ref{thm:main} asserts that any shellable $(d-1)$-sphere $\Delta$ can be reconstructed from its facet-ridge graph. We assume that $d \ge 3$ as the case of $d = 2$ has been addressed by Example \ref{ex:1dsphere}. 

\begin{proof}[Proof of Theorem \ref{thm:main}]
To prove the statement, we show by induction on $d$ that if $\Delta$ is a shellable simplicial $(d-1)$-sphere and $\{\cS_k\}_{2 \le k \le d-1}$ is a compatible family of star-like $k$-systems of the facet-ridge graph of $\Delta$, then $\{\cS_k\}_{2 \le k \le d-1}$ coincides with $\{\cV^\Delta_k\}_{2 \le k \le d-1}$. The base case of $d = 3$ has been proved in Example \ref{ex:2dsphere}. Suppose now that the assertion holds for all shellable simplicial spheres of dimension less than $d-1$.
    
    We carry the usual notations and assumptions from Example \ref{ex:2dsphere}. If $\tau$ is a $(d-k-1)$-face that is neither contained in the first facet in the shelling $T_1$ nor a restriction face of any of the other facets, then there is a facet $T \ne T_1$ such that $\cR^\cO(T)$ is a proper subset of $\tau$. By Lemma \ref{lem:restriction}, $\cS_{d-\dim \cR^\cO(T)-1}(\cR^\cO(T)) = \cV^\Delta_{d-\dim \cR^\cO(T)-1}(\cR^\cO(T))$. Since $\lk_\Delta\cR^\cO(T)$ is a shellable simplicial sphere of dimension less than $d-1$, it follows from the inductive hypothesis that $\cS_k(\tau) = \cV^\Delta_k(\tau)$. Therefore, $\{\cS_k\}_{2 \le k \le d-1}$ satisfies the hypothesis of Lemma \ref{lem:firstfacet}. Finally, if $\sigma$ is a $(d-k-1)$-face in $[\cR^\cO(T_1), T_1]$, then $\cS_k(\sigma) = \cV^\Delta_k(\sigma)$ by Lemma \ref{lem:firstfacet}. 

    We have thus determined for all faces $\sigma$ of $\Delta$ the facets of $\st_\Delta \sigma$, and therefore the entire combinatorial structure of $\Delta$. 
\end{proof}

\begin{remark*}
It remains an open problem whether Theorem \ref{thm:main} holds for more general classes of simplicial spheres, such as constructible spheres, partitionable spheres, or all simplicial spheres (though, at the moment, we do not know if there exist constructible spheres that are not shellable or if there exist simplicial spheres that are not partitionable). The method we employ in this paper cannot be applied to simplicial spheres that are partitionable but not shellable. While partitionability of $\Delta$ implies the existence of a good orientation of $G$, only shellability of $\Delta$ guarantees that there is an acyclic one. Therefore, without the shellability assumption, Proposition \ref{prop:findgoodorientation} will not work as an intrinsic way to identify the good orientations of $G$. 
\end{remark*}

\section*{Acknowledgements}
I would like to thank Isabella Novik for her detailed and prompt feedback on the numerous revisions of the manuscript. Thanks also to Hailun Zheng and the anonymous referees for the helpful editing comments, and to Luz Grisales for spotting an error in the original proof of Lemma \ref{lem:restriction}. The problem of reconstructing spheres was suggested to me by Karim Adiprasito. This research was partially supported by Graduate Fellowship from NSF grants DMS-1953815 and DMS-2246399.

\end{document}